\newtheorem{theorem}{Theorem}
\newtheorem{lemma}{Lemma}
\newcommand{\R}{\mathbb{R}}
\newcommand{\eps}{\epsilon}
\newcommand{\Ep}[2]{\mathbb{E}_{{#1}}\left[{#2}\right]}
\newcommand{\PP}[1]{\mathbb{P}\left\{{#1}\right\}}
\newcommand{\Pp}[2]{\mathbb{P}_{{#1}}\left\{{#2}\right\}}
\newcommand{\Ppst}[3]{\mathbb{P}_{{#1}}\left\{{#2}\  \middle| \ {#3}\right\}}
\newcommand{\One}[1]{{\mathbbm{1}}\left\{{#1}\right\}}
\newcommand{\iidsim}{\stackrel{\textnormal{iid}}{\sim}}
\newcommand{\Chn}{\widehat{C}_n}
\newcommand{\traindata}{\mathcal{D}_n}
\newcommand{\littleo}{\mathrm{o}}
\newcommand{\alg}{\mathcal{A}}
\newcommand{\muh}{\widehat{\mu}}
\newcommand{\Xcal}{\mathcal{X}}
\newcommand{\Ecal}{\mathcal{E}}
\renewcommand{\mod}{\textnormal{mod}}
\newcommand*\samethanks[1][\value{footnote}]{\footnotemark[#1]}
\title{Training-conditional coverage for distribution-free predictive inference}
\author{Michael Bian\thanks{Department of Statistics, University of Chicago} \and
 Rina Foygel Barber\samethanks}
\begin{document}
\maketitle

\begin{abstract}
The field of distribution-free predictive inference provides tools for provably
valid prediction without any assumptions on the distribution of the data, 
which can be paired with any regression algorithm to provide accurate and reliable
predictive intervals. The guarantees provided by these methods are typically marginal,
meaning that predictive accuracy holds on average over both the training data set
and the test point that is queried. However, it may be preferable to obtain 
a stronger guarantee of training-conditional coverage, which would ensure that 
most draws of the training data set result in accurate predictive accuracy on future
test points. This property is known to hold for the split conformal prediction
method. In this work, we examine the training-conditional coverage properties of several other
 distribution-free predictive inference methods, and find
that training-conditional coverage is achieved by some methods but is impossible to guarantee
without further assumptions for others.
\end{abstract}

\section{Introduction}\label{sec:intro}
Distribution-free predictive inference provides a set of methods for constructing predictive confidence intervals with minimal assumptions about the underlying distribution. Specifically, in the case of regression, suppose we are given i.i.d. training points $(X_i, Y_i) \in \Xcal \times \R$, $i=1,...,n$, and a regression algorithm $\alg$ that maps training points to prediction rules $\muh: \Xcal\to \R$.
Given a new feature vector $X_{n+1}$, we would like to predict the unseen response $Y_{n+1}$.
 A predictive interval $\Chn$, trained on these $n$ training data points using this regression algorithm $\alg$, 
returns an interval (or more generally, a subset) $\Chn(X_{n+1})\subseteq\R$, with the goal that  $\Chn(X_{n+1})$
should contain the response value $Y_{n+1}$ for this test point.
We say that $\Chn$ is a {\em distribution-free predictive interval} if, for every distribution $P$ on $\Xcal\times\R$
it holds that
\begin{equation}\label{eqn:DF_marginal}
\Pp{P^{n+1}}{ Y_{n+1} \in \Chn(X_{n+1}) } \geq 1-\alpha.
\end{equation}
Here the notation $\Pp{P^{n+1}}{\cdot}$ 
denotes that the probability is computed with respect to $(X_1, Y_1),...,(X_n, Y_n), (X_{n+1}, Y_{n+1})\iidsim P$.

In practice, we are often interested in the coverage rate for test points once we fit a regression algorithm to a particular training set. However, the guarantee in \eqref{eqn:DF_marginal} does not directly address this. Rather, it bounds the miscoverage rate \textit{on average} over possible sets of training data and test points. As a result, if there is high variability in the coverage rate as a function of the training data, the test coverage rate may be substantially below $1-\alpha$ for a particular training set. In this case, while \eqref{eqn:DF_marginal} is satisfied on average, after fitting on the realized draw of the training set distribution the practitioner may be left with  prediction intervals which drastically undercover.

To formalize this intuition, let $\traindata =\big( (X_1, Y_1), ...,  (X_n,Y_n)\big)$ be the training data set.
 Then, define the miscoverage rate as a function of the training data:
\[\alpha_P(\traindata) = \Ppst{P}{ Y_{n+1} \notin \Chn(X_{n+1}) }{ \traindata},\]
where the probability is now only with respect to the test point $(X_{n+1}, Y_{n+1})$ drawn from $P$. Then, the guarantee in \eqref{eqn:DF_marginal} can be re-written as
\[\Ep{P^n}{\alpha_P(\traindata)} \leq \alpha,\]
where the expectation is with respect to the training data $\traindata \sim P^n$ 
(and, in order to be distribution-free, this bound is again required to hold for every distribution $P$ on $\Xcal\times\R$). 

While this expectation is bounded, $\alpha_P(\traindata)$ may have high variance over the training data. 
In particular, we can consider a worst-case scenario where
\begin{equation}\label{eqn:worstcase}
\Pp{P^n}{\alpha_P(\traindata)=1} = \alpha, \quad \Pp{P^n}{\alpha_P(\traindata)=0} =1- \alpha,
\end{equation}
which trivially satisfies the marginal coverage guarantee~\eqref{eqn:DF_marginal}
since $\Ep{P^n}{\alpha_P(\traindata)}=\alpha$.
In other words, in this worst-case scenario, a nonnegligible proportion of training sets might result in $0\%$ training-conditional coverage even though the average coverage is still $1-\alpha$, which may be highly problematic in practice.
On the other hand, if we instead had $\alpha_P(\traindata)\approx \alpha$ with high probability over $\traindata\sim P^n$,
this would be ideal, since it ensures that for nearly every possible draw of the training data, the resulting
coverage over future test points should be $\approx 1-\alpha$. 

The variability of training-conditional miscoverage level $\alpha_P(\traindata)$ will in general
depend on the distribution $P$, the regression algorithm $\alg$, and the particular distribution-free method that is used to generate $\Chn(X_{n+1})$. In this paper, we examine the variability of coverage for popular distribution-free methods for arbitrary $P$ and $\alg$. In particular, we seek to provide guarantees of the form
\begin{equation}
\label{eqn:bestcase_constant}\Pp{P^n}{\alpha_P(\traindata) > \alpha + \littleo(1)} \leq \littleo(1),
\end{equation}
also known as a ``Probably Approximately Correct'' (PAC) predictive interval.
This type of guarantee turns out to be possible for some distribution-free methods and impossible for others. In addition, we demonstrate empirically that methods without guarantees of this form can exhibit highly variable training-conditional miscoverage rates in low stability regimes.

\section{Background}
 
In this section, we will briefly review four related methods
for distribution-free predictive inference, to introduce the methods that we will study in this work.
 Consider an algorithm $\alg$ that maps datasets (consisting of $(X,Y)$ pairs, with features $X\in\Xcal$ and 
a real-valued response $Y\in\R$), to fitted regression functions $\muh:\Xcal\rightarrow\R$.

For a new data point whose features $X_{n+1}\in\Xcal$ are observed, 
we would like to predict the unseen response $Y_{n+1}\in\R$.
 Given a model $\muh$ obtained by training some algorithm $\alg$ on the available training data $(X_1,Y_1),\dots,(X_n,Y_n)$,
can we construct a prediction interval for $Y_{n+1}$ around the estimate $\muh(X_{n+1})$?
In  many practical
settings, the distribution of the data is likely unknown, and the regression algorithm $\alg$ may be a complex ``black
box'' methods whose theoretical properties are not well understood, and therefore it may be challenging
to guarantee a particular error bound for $\muh(X_{n+1})$ as an estimator of the unseen response $Y_{n+1}$.

\subsection{Distribution-free methods}
\subsubsection{Conformal prediction}
The conformal prediction framework  \citep{vovk2005algorithmic},
which includes the full and split conformal methods (also called ``transductive'' and ``inductive'' conformal, respectively),
 provides a mechanism for constructing prediction intervals in this challenging setting, 
 with distribution-free coverage guarantees. (See also \citet{lei2018distribution} for additional
 background on these methods.)
 
To run the split conformal method, we first partition the $n$ available labeled data points into 
a training set of size $n_0$ and a holdout set of size $n_1=n-n_0$. After running the regression
algorithm on the training data to obtain the fitted model $\muh_{n_0} = \alg\big((X_1,Y_1),\dots,(X_{n_0},Y_{n_0})\big)$,
the prediction interval is defined as
\begin{equation}\label{eqn:define_split_conformal}
\Chn(X_{n+1}) = \muh_{n_0}(X_{n+1})\pm \widehat{Q}_{n_1},\end{equation}
where
$\widehat{Q}_{n_1}$ is defined as the $\lceil(1-\alpha)(n_1+1)\rceil$-th smallest value of 
the holdout residuals
$|Y_{n_0+1} - \muh_{n_0}(X_{n_0+1})|, \dots , |Y_n-\muh_{n_0}(X_n)|$.
This method satisfies the marginal distribution-free predictive coverage guarantee~\eqref{eqn:DF_marginal} \citep{vovk2005algorithmic}.

While split conformal offers both computational efficiency and distribution-free coverage,
its precision may suffer from the loss of sample size incurred by splitting the data set.
In contrast, full conformal 
uses all the available training data for model fitting, but comes at a high computational cost.
Specifically, for every $y\in\R$, define $\muh_{n+1}^y = \alg\big((X_1,Y_1),\dots,(X_n,Y_n),(X_{n+1},y)\big)$,
the fitted model obtained by running algorithm $\alg$ on the training data together with the hypothesized
test point $(X_{n+1},y)$. Then construct the prediction set
\begin{equation}\label{eqn:define_full_conformal}\Chn(X_{n+1}) = \left\{y \in \R: |y - \muh^y_{n+1}(X_{n+1})| \leq \widehat{Q}_{n+1}^y\right\},\end{equation}
where $\widehat{Q}_{n+1}^y$ is defined as  the $\lceil(1-\alpha)(n+1)\rceil$-th smallest value of 
the residuals $|Y_1 - \muh^y_{n+1}(X_1)|,\dots,|Y_n - \muh^y_{n+1}(X_n)|,|y - \muh^y_{n+1}(X_{n+1})|$.
Full conformal prediction also offers the distribution-free coverage guarantee~\eqref{eqn:DF_marginal} \citep{vovk2005algorithmic},
 under one additional assumption---the 
algorithm $\alg$ needs to be {\em symmetric} in the training data points, meaning
that for any $m\geq 1$, any permutation $\sigma$ on $[m]:=\{1,\dots,m\}$, and any data points $(x_1,y_1),\dots,(x_m,y_m)\in\Xcal\times\R$,
\begin{equation}\label{eqn:alg_symmetric}\alg\big((x_1,y_1),\dots,(x_m,y_m)\big) = \alg\big((x_{\sigma(1)},y_{\sigma(1)}),\dots,(x_{\sigma(m)},y_{\sigma(m)})\big) .\end{equation}
Full conformal prediction is generally more statistically efficient than split conformal (i.e., will provide narrower prediction intervals)
since we do not need to split the training data. On the other hand, the computational cost is high---aside from special
cases (e.g., choosing $\alg$ to be the Lasso \citep{lei2019fast}), the prediction interval can only be calculated by running the regression algorithm 
$\alg$ for every possible $y\in\R$, or in practice, for a very fine grid of $y$ values (theoretical guarantees for this 
discretized setting can also be obtained, as shown by \citet{chen2018discretized}).
 
 \subsubsection{Jackknife+ and CV+}
 The jackknife+ and CV+ methods proposed by \citet{barber2021predictive}
 offer a compromise between the computational efficiency of split conformal and the statistical efficiency
 of full conformal. These methods, which are closely related to the cross-conformal procedure of \citet{vovk2015cross,vovk2018cross},
use a cross-validation type approach. 

For jackknife+, let $\muh_{[n]\backslash\{i\}}$ denote the model fitted to the training data with data point $i$ removed,
\[\muh_{[n]\backslash\{i\}} = \alg\big((X_1,Y_1),\dots,(X_{i-1},Y_{i-1}),(X_{i+1},Y_{i+1}),\dots,(X_n,Y_n)\big).\]
Then the jackknife+ prediction interval is defined as
\begin{multline}\label{eqn:define_jackknife+}
\Chn(X_{n+1}) = 
\bigg[\textnormal{the $\lceil(1-\alpha)(n+1)\rceil$-th largest of $\{\muh_{[n]\backslash \{i\}}(X_{n+1}) - R_i\}_{i\in[n]}$},\\
\textnormal{the $\lceil(1-\alpha)(n+1)\rceil$-th smallest of $\{\muh_{[n]\backslash \{i\}}(X_{n+1}) + R_i\}_{i\in[n]}$}\bigg],
\end{multline}
where $R_i = |Y_i - \muh_{[n]\backslash\{i\}}(X_i)|$ for $i\in[n]:=\{1,\dots,n\}$.
The jackknife+ method offers a weaker distribution-free coverage guarantee \citep[Theorem 1]{barber2021predictive}:
 for every distribution $P$ on $\Xcal\times\R$, assuming $\alg$ is symmetric as in~\eqref{eqn:alg_symmetric},
\begin{equation}\label{eqn:DF_jackknife+}\Pp{P^{n+1}}{Y_{n+1}\in\Chn(X_{n+1})} \geq 1-2\alpha.\end{equation}
Note that, in this theoretical guarantee, noncoverage may be as high as $2\alpha$, rather than the target level $\alpha$.
However, empirically the method typically achieves coverage at level $1-\alpha$,
and indeed, under algorithmic stability assumptions, e.g.,
\begin{equation}\label{eqn:assume_stability}
\Pp{P^{n+1}}{\left|\muh_n(X_{n+1}) - \muh_{[n]\backslash\{i\}}(X_{n+1})\right|\leq \eps}\geq 1-\nu,
\end{equation}
the predictive coverage guarantee can be improved to $1-\alpha - \littleo(1)$ \citep[Theorem 5]{barber2021predictive}.

While jackknife+ requires only $n$ many calls to the regression algorithm $\alg$ (in contrast to full conformal, which 
in theory requires infinitely many calls), for a large sample size $n$ this computational cost may still be too high.
CV+ extends the jackknife+ method to $K$-fold cross-validation (where we can view jackknife+ as $n$-fold cross-validation,
i.e., $K=n$). 
Let $[n] = S_1\cup\dots\cup S_K$ be a partition of the training data into $K$ subsets of size $n/K$, and write $\muh_{[n]\backslash S_k}$
as the fitted model when the $k$-th fold $S_k$ is removed from the $n$ training data points.
The CV+ prediction interval is defined as
\begin{multline}\label{eqn:define_CV+}
\Chn(X_{n+1}) = 
\bigg[\textnormal{the $\lceil(1-\alpha)(n+1)\rceil$-th largest of $\{\muh_{[n]\backslash S_{k(i)}}(X_{n+1}) - R_i\}_{i\in[n]}$},\\
\textnormal{the $\lceil(1-\alpha)(n+1)\rceil$-th smallest of $\{\muh_{[n]\backslash S_{k(i)}}(X_{n+1}) + R_i\}_{i\in[n]}$}\bigg],
\end{multline}
where now $R_i = |Y_i - \muh_{[n]\backslash S_{k(i)}}(X_i)|$ for $i\in[n]$, and where $k(i)$ denotes the fold
to which data point $i$ belongs, i.e., $i\in S_{k(i)}$.
The CV+ method's coverage guarantee is given by \citet[Theorem 4]{barber2021predictive} (see also \citet{vovk2018cross}
for a partial version of this result):
 for every distribution $P$ on $\Xcal\times\R$, assuming $\alg$ is symmetric as in~\eqref{eqn:alg_symmetric},
\begin{equation}\label{eqn:DF_CV+}\Pp{P^{n+1}}{Y_{n+1}\in\Chn(X_{n+1})} \geq 1-2\alpha - \sqrt{2/n}.\end{equation}
As for jackknife+,
the CV+ method typically achieves coverage near or above the target level $1-\alpha$ in practice.

\subsubsection{A note on randomized algorithms}
The background given above implicitly treats the algorithm $\alg$ as a {\em deterministic} function of the 
training data---that is, we view $\alg$ as a function $\big((X_1,Y_1),\dots,(X_n,Y_n)\big)\mapsto\muh$.
In many settings, however, it is common to use a {\em randomized} regression algorithm---for instance,
stochastic gradient descent. In this setting, we can formally view $\alg$ as a function 
$\big((X_1,Y_1),\dots,(X_n,Y_n),\xi\big)\mapsto\muh$, where the term $\xi$ introduces stochastic noise (effectively,
a random seed). All the results described above hold for both the deterministic and randomized settings.
(For results that assume $\alg$ is symmetric, the symmetry condition~\eqref{eqn:alg_symmetric} should be understood
in the distributional sense---that is, the training data points are treated symmetrically with respect to the randomized
training procedure. For example, for stochastic gradient descent, if data points are drawn uniformly at random
during the training epochs, then symmetry is satisfied.)

\subsection{Marginal or conditional validity}\label{sec:marginal_or_conditional}
The predictive coverage bound~\eqref{eqn:DF_marginal} achieved by split and full conformal,
or the bounds~\eqref{eqn:DF_jackknife+} and~\eqref{eqn:DF_CV+} for the jackknife+ and CV+ methods, 
are all {\em marginal} guarantees. This means that the probability is calculated over a random draw of both
the training and test data.
However, this may be unsatisfactory for practical purposes, in several ways.

\paragraph{Training-conditional coverage}
First, as discussed in Section~\ref{sec:intro} above, we may be interested in {\em training-conditional coverage},
which ensures that the predictive coverage guarantees hold (at least approximately)
even after conditioning on the training data set $\traindata = \big((X_1,Y_1),\dots,(X_n,Y_n)\big)$.
For the split conformal method described in~\eqref{eqn:define_split_conformal},
\citet[Proposition 2a]{vovk2012conditional} establishes training-conditional coverage through a Hoeffding bound:
\begin{theorem}[{\citet[Proposition 2a]{vovk2012conditional}}]\label{thm:split_conformal}
Consider the split conformal method defined in~\eqref{eqn:define_split_conformal}
with sample size $n=n_0+n_1$, where $n_0\geq 1$ many data points are used for training the fitted model $\muh_{n_0}$ 
(with an arbitrary
algorithm) while the remaining $n_1\geq1$ data points are used as the holdout set. Then, for any distribution $P$ and any $\delta\in(0,0.5]$,
\[\Pp{P^n}{\alpha_P(\traindata) \leq \alpha + \sqrt{\frac{\log(1/\delta)}{2n_1}}} \geq 1- \delta.\]
\end{theorem}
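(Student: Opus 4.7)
The plan is to condition on the training set used to fit $\muh_{n_0}$, reduce $\alpha_P(\traindata)$ to an expression in terms of the CDF of the holdout residuals, and then bound that expression via a single application of Hoeffding's inequality to a binomial count. Write $\mathcal{D}_{n_0} = \big((X_1,Y_1),\dots,(X_{n_0},Y_{n_0})\big)$; conditional on $\mathcal{D}_{n_0}$, the fitted function $\muh_{n_0}$ is deterministic, the holdout residuals $R_{n_0+1},\dots,R_n$ with $R_j = |Y_j - \muh_{n_0}(X_j)|$ are i.i.d.\ with some CDF $F$, and an independent test residual $R_{n+1}$ has the same distribution $F$. Because the split conformal interval covers $Y_{n+1}$ iff $R_{n+1} \leq \widehat{Q}_{n_1}$, the miscoverage satisfies
\[
\alpha_P(\traindata) \;=\; \Pp{P}{R_{n+1} > \widehat{Q}_{n_1} \mid \traindata} \;=\; 1 - F(\widehat{Q}_{n_1}),
\]
so it suffices to show $F(\widehat{Q}_{n_1}) \geq 1 - \alpha - \eps$ with probability at least $1-\delta$ over $\traindata$, where $\eps = \sqrt{\log(1/\delta)/(2n_1)}$.

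Next I would pick a deterministic cutoff that has the right population-level probability. Let $\beta = \alpha + \eps$ and $u = \inf\{t : F(t) \geq 1 - \beta\}$, so that by right-continuity $F(u) \geq 1-\beta$ and by the definition of the infimum $\Pp{}{R_j < u} = F(u^-) \leq 1 - \beta$. Writing $k = \lceil (1-\alpha)(n_1+1)\rceil$ for the order-statistic index, the event $\{\widehat{Q}_{n_1} \geq u\}$ is identical to the event $\{N \leq k-1\}$, where $N = \sum_{j=n_0+1}^{n} \One{R_j < u}$; and on this event $F(\widehat{Q}_{n_1}) \geq F(u) \geq 1-\beta$, which gives exactly the desired miscoverage bound $\alpha_P(\traindata) \leq \beta$.

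Now conditional on $\mathcal{D}_{n_0}$, $N$ is $\mathrm{Binomial}(n_1, p)$ with $p = F(u^-) \leq 1-\beta$, and a short arithmetic check shows $k \geq (1-\alpha)n_1 \geq n_1 p + n_1 \eps$. Hoeffding's inequality therefore yields
\[
\Pp{}{N \geq k \,\middle|\, \mathcal{D}_{n_0}} \;\leq\; \exp\!\big(-2 (k - n_1 p)^2 / n_1\big) \;\leq\; \exp\!\big(-2 n_1 \eps^2\big) \;=\; \delta.
\]
Marginalizing over $\mathcal{D}_{n_0}$ then gives $\Pp{P^n}{\alpha_P(\traindata) > \alpha + \eps} \leq \delta$, as claimed.

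The proof is mostly bookkeeping and I expect no substantive obstacle; the only fiddly step is being careful with strict versus non-strict inequalities when $F$ has atoms, which is why I work with $F(u^-)$ and with the count of residuals strictly below $u$. Everything else is a single one-sided Hoeffding bound tuned so that the slack $k - n_1(1-\beta)$ is at least $n_1 \eps$.
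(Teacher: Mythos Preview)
Your argument is correct. You reduce $\alpha_P(\traindata)$ to $1-F(\widehat Q_{n_1})$, pick the population $(1-\beta)$-quantile $u$, and apply a single one-sided Hoeffding bound to the binomial count $N=\sum_j\One{R_j<u}$; the handling of atoms via $F(u^-)$ is exactly what is needed, and the arithmetic $k\geq(1-\alpha)n_1\geq n_1p+n_1\eps$ is right.

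The paper's proof takes a different route: it packages the concentration step as a \emph{uniform} bound (Lemma~\ref{lem:holdout}), using the Dvoretzky--Kiefer--Wolfowitz inequality to control $\sup_{(x,y)}\big(p^*_A(x,y)-p_A(x,y)\big)$ rather than a single Hoeffding bound at one fixed threshold. Your approach is more elementary for split conformal in isolation and, incidentally, does not require the restriction $\delta\in(0,0.5]$ that enters the paper's proof through the one-sided DKW constant. The paper's DKW-based lemma, on the other hand, is formulated so that the same machinery immediately drives the CV+ proof (Theorem~\ref{thm:CV+}), where one must simultaneously control $K$ empirical-vs-population quantile gaps and then average the resulting p-values; there the uniform control is what makes the union bound over folds clean. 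One minor point: you assume $\muh_{n_0}$ is deterministic given $\mathcal D_{n_0}$; for a randomized algorithm you would simply condition on the random seed as well, and nothing else changes.
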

\noindent  This result holds for both deterministic and randomized algorithms $\alg$. Note that it is not necessary to assume that $\alg$ is symmetric.

In other words, the probability that a training set results in a significantly higher training-conditional miscoverage rate than the nominal rate, is vanishingly small under the split conformal method.
Of course, by running split conformal at a modified value $\alpha' := \alpha - \sqrt{\frac{\log(1/\delta)}{2n_1}}$,
we would obtain a slightly more conservative prediction interval that would then satisfy
\[\Pp{P^n}{\alpha_P(\traindata) \leq \alpha} \geq 1- \delta.\]
This type of guarantee (i.e., with probability at least $1-\delta$, we obtain at least $1-\alpha$ coverage, where $\alpha$ and $\delta$
are specified by the user) is often referred to as a {\em probably approximately correct} (PAC) guarantee.
This style of inference guarantee dates back to the work of \citet{wilks1941determination,wald1943extension} on setting
``tolerance limits'', i.e., a prediction interval (in the univariate case) or prediction region (in the multivariate case),
for a random variable $Y\sim P$, given $n$ i.i.d.~draws $Y_1,\dots,Y_n\iidsim P$ (that is, a prediction region $\Chn$ for $Y$ without any covariate $X$,
such that $P(\Chn)\geq 1-\alpha$ holds with probability at least $1-\delta$, for user-specified parameters $\alpha$ and $\delta$). 
More recent results offering PAC-style
training-conditional coverage guarantees for the regression setting,
via split conformal and related methods, can be found in the work of \citet{kivaranovic2020adaptive,bates2021distribution,yang2021finite,park2020pac};
see also \citet{park2021pac,qiu2022distribution,yang2022doubly} for training-conditional coverage under covariate shift.

No analogous finite-sample results are known for
distribution-free prediction methods beyond split conformal,
although \citet{steinberger2018conditional}
 analyze asymptotic training-conditional validity for the jackknife and for cross-validation under algorithmic stability
type assumptions such as~\eqref{eqn:assume_stability}. In this work, our goal will be to examine the finite-sample
training-conditional coverage properties of distribution-free methods beyond split conformal.

\paragraph{Object-conditional or label-conditional coverage}
As a second way in which marginal coverage may not be sufficient for practical utility,
we may also be interested in coverage at a particular new test feature vector $X_{n+1}$ (referred
to in \citet{vovk2012conditional} as {\em object-conditional coverage})---for instance, if the data points correspond
to individual patients in a clinical setting, is it true
that a given patient with a particular feature vector $X_{n+1}=x$ has a $1-\alpha$ probability of a correct predictive interval?
That is, we would like to show that the conditional coverage probability $\Ppst{P^{n+1}}{Y_{n+1}\in\Chn(X_{n+1})}{X_{n+1}=x}$
is $\geq 1-\alpha$, at least approximately. However, \citet{vovk2012conditional,lei2014distribution}
show that this type of guarantee is impossible under any distribution $P$ for which $X$ is nonatomic
(i.e., $\Pp{P}{X=x}=0$ for all $x\in\Xcal$---for instance, this is satisfied by any continuous distribution on $\R^d$);
see also \citet{barber2021limits}. 
A third type of conditional guarantee is that of {\em label-conditional coverage} \citep{vovk2012conditional,lofstrom2015bias}
for the setting where the response $Y$ is categorical, requiring
accuracy conditional on the class, i.e., $\Ppst{P^{n+1}}{Y_{n+1}\in\Chn(X_{n+1})}{Y_{n+1}=y} \geq 1-\alpha$
for each category $y$.
Both of these type of conditional guarantees are fundamentally very different from training-conditional
coverage, and we will not address these further in this work.

\section{Theoretical results}
As shown in Theorem~\ref{thm:split_conformal} above, a training-conditional guarantee of the form~\eqref{eqn:bestcase_constant}
was established by \citet{vovk2012conditional} for the split conformal method.
In our work,
we find that a guarantee of the form~\eqref{eqn:bestcase_constant} can also be shown
 for the $K$-fold CV+ method (as long as $n/K$, the number of data points in each fold,
 is sufficiently large), but no such guarantees are possible for the full conformal or jackknife+ methods. In this section, we present the main results
for each of the three previously unstudied methods. The proofs will be given in Section~\ref{sec:proofs} below.

First, we consider the full conformal prediction method. In contrast to split conformal, 
 it is impossible to guarantee training-conditional coverage
for the full conformal method without further assumptions. 
\begin{theorem}\label{thm:full_conformal} For any sample size $n\geq 2$ and any distribution $P$ for which the marginal $P_X$ is nonatomic,
	 there exists a symmetric and deterministic regression algorithm $\alg$
	such that the full conformal prediction method defined in~\eqref{eqn:define_full_conformal}
	satisfies
	\[\Pp{P^n}{\alpha_P(\traindata) \geq 1-n^{-2}} \geq \alpha - 6\sqrt{\frac{\log n}{n}}.\]
\end{theorem}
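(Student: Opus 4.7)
My plan is to prove this impossibility result by explicit adversarial construction: for the given distribution $P$ with nonatomic $P_X$, I will exhibit a symmetric deterministic regression algorithm $\alg$ whose full conformal intervals suffer bad training-conditional coverage. The strategy is to route the algorithm's behavior through a symmetric statistic of the augmented input so that on a $(\alpha - \littleo(1))$-probability training event $E$, the conformal interval collapses to a set of negligible $P_Y$-mass.

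The construction leverages nonatomicity of $P_X$ to select reference sets of any specified $P_X$-measure. First I would fix a reference set $A\subseteq \Xcal$ with $P_X(A)=\alpha$ and design $\alg$ so that its output on an input dataset switches between a benign default mode and an ``adversarial'' mode, with the mode selected by a symmetric statistic of the input (for instance, the count of $x_i\in A$, or the position of an empirical quantile). The trigger would be engineered so that a particular symmetric training event $E$ causes the adversarial mode to activate for every augmented input $\traindata\cup\{(X_{n+1},y)\}$, regardless of the test point. A Hoeffding-type concentration applied to the binomial $|\{i\le n: X_i\in A\}|$ (or a DKW bound on the empirical CDF) would then yield $\Pp{P^n}{E}\ge \alpha - 6\sqrt{\log n/n}$. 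In adversarial mode, $\alg$ is designed to make $\lceil(1-\alpha)(n+1)\rceil$ of the $n+1$ augmented residuals vanish, forcing the conformal threshold $\widehat{Q}_{n+1}^y$ to $0$. Then $\Chn(X_{n+1})=\{y: |y-\muh^y_{n+1}(X_{n+1})|=0\}$ reduces to a set of $P_{Y\mid X=X_{n+1}}$-measure at most $n^{-2}$ for $P_X$-almost every $X_{n+1}$, giving $\alpha_P(\traindata)\ge 1-n^{-2}$ on $E$.

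The main obstacle is that $\alg$ must treat the $n+1$ augmented points symmetrically, so it cannot directly distinguish the test point from the training points. The adversarial residual structure therefore has to be induced through symmetric operations on the augmented data---for example, interpolating the $n$ ``typical'' points selected by a symmetric rule and assigning a specific data-dependent constant to the remaining ``atypical'' point. The quantitative crux will be choosing $E$ and the adversarial model jointly so that on $E$ the conformal rank pattern becomes effectively deterministic conditional on the training data, making the singleton-like interval collapse uniformly over $X_{n+1}$ rather than only on a test-point-dependent $\alpha$-fraction (a trap that sinks naive constructions, which instead give conditional miscoverage $\approx\alpha$ on every training set).
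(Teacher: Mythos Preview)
Your plan correctly identifies the architecture (adversarial symmetric $\alg$, training event of probability $\approx\alpha$) and the central obstacle (symmetry prevents distinguishing the test point). But you have not actually escaped the trap you flag in your last sentence; the sketched construction falls into it. If ``typicality'' is decided by any symmetric rule on the $n+1$ augmented inputs, exchangeability forces the test point $(X_{n+1},y)$ to be typical with the same probability as any training point. Whenever it is typical, your interpolation step gives $\muh^y_{n+1}(X_{n+1})=y$ and hence $y\in\Chn(X_{n+1})$; the training-conditional miscoverage is then bounded by the probability (over the test point) of being atypical, which is $\approx\alpha$ on \emph{every} training set rather than $\approx 1$ on an $\alpha$-fraction of training sets. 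A second issue: driving the threshold to exactly $0$ is fragile because $P_Y$ is not assumed nonatomic, so the resulting near-singleton could still carry $P_Y$-mass $\gg n^{-2}$.

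The paper resolves the symmetry obstruction with a modular-sum device that your sketch is missing. Partition $\Xcal$ into $M$ cells of equal $P_X$-measure via $a:\Xcal\to\{0,\dots,M-1\}$ and let $\muh(x)$ depend only on $\mod\bigl(-a(x)+\sum_{i=1}^{n+1}a(x_i),\,M\bigr)$: it equals $2y_*$ if this value is below $M_1\approx\alpha M$, and $0$ otherwise, where $y_*$ is the $(1-n^{-2})$-quantile of $|Y|$. Evaluated at the test input $x=X_{n+1}$, the term $-a(X_{n+1})$ cancels and the argument becomes $\mod(\sum_{i\le n}a(X_i),M)$, a function of the training data alone; the training event that this sum lies below $M_1$ (probability $\approx\alpha$) then forces $\muh^y_{n+1}(X_{n+1})=2y_*$ for \emph{every} $(X_{n+1},y)$. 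At a training input $X_i$, the argument instead involves $a(X_{n+1})$, but a separate DKW-type uniformity event guarantees that for every value of $X_{n+1}$, at least $\lceil(1-\alpha)(n+1)\rceil$ training points receive prediction $0$ and hence residual $|Y_i|<y_*$. The conformal interval is then contained in $(y_*,3y_*)$, which has $P_Y$-mass at most $n^{-2}$. The key decoupling---the test-point prediction depends only on the training data while each training-point prediction depends on the test feature---is exactly what breaks the exchangeability trap you identified but did not solve.
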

\noindent In other words, without placing assumptions on the distribution
$P$ and/or the algorithm $\alg$  (beyond the standard symmetry assumption), we cannot avoid the worst-case scenario~\eqref{eqn:worstcase}, where the marginal 
guarantee of $1-\alpha$ coverage stated in~\eqref{eqn:DF_marginal} is achieved only because
the training data set yields $\approx 100\%$ coverage with probability $\approx 1-\alpha$, and $\approx 0\%$ coverage
with probability $\approx \alpha$.
(Our result holds only for distributions $P$
where $X$ is nonatomic, i.e., $\Pp{P}{X=x}=0$ for all $x\in\Xcal$---this condition appears also in the impossibility 
results for object-conditional coverage as described earlier in Section~\ref{sec:marginal_or_conditional}.)

Next, for the jackknife+, the same worst-case result holds.
\begin{theorem}\label{thm:jackknife+}  For any sample size $n\geq 2$ and any distribution $P$ for which the marginal $P_X$ is nonatomic, 
	 there exists a symmetric and deterministic regression algorithm $\alg$
	such that the jackknife+ prediction interval defined in~\eqref{eqn:define_jackknife+} satisfies
	\[\Pp{P^n}{\alpha_P(\traindata) \geq 1-n^{-2}} \geq \alpha - 6\sqrt{\frac{\log n}{n}}.\]
\end{theorem}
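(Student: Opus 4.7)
The plan is to prove Theorem~\ref{thm:jackknife+} by mirroring the impossibility construction used for full conformal in Theorem~\ref{thm:full_conformal}. For a fixed distribution $P$ with nonatomic $P_X$, I would exhibit a symmetric deterministic algorithm $\alg$ (whose definition is allowed to depend on $P$) together with a bad event $E$ on $\traindata$ such that (i) $\Pp{P^n}{E} \geq \alpha - 6\sqrt{\log n/n}$, and (ii) on $E$ the jackknife+ interval~\eqref{eqn:define_jackknife+} satisfies $\alpha_P(\traindata) \geq 1 - n^{-2}$. Combining these immediately yields the stated bound.

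For (i), I would take $E$ to be a symmetric, rank- or threshold-based event on the training features $X_1,\dots,X_n$, chosen so that its probability is $\alpha$ up to a $6\sqrt{\log n/n}$ slack. A natural candidate is an event of the form ``$\widehat F_n(A)$ exceeds a prescribed threshold'' for a set $A\subseteq\Xcal$ with a carefully chosen $P_X$-measure, where $\widehat F_n$ is the empirical measure; a Hoeffding-type bound (as in the proof of Theorem~\ref{thm:split_conformal}) then delivers the desired probability estimate. The nonatomicity of $P_X$ lets one tune $A$ to match any target probability, and the empirical-count form of $E$ is nearly invariant under leave-one-out perturbations, which is what allows the adversarial behavior to trigger coherently across all $n$ LOO fits $\muh_{[n]\setminus\{i\}}$.

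For (ii), $\alg$ is designed so that whenever the LOO analogue of $E$ fires on its input, it outputs a tailored function $\muh_{\text{bad}}$, and otherwise outputs a benign default (e.g.\ the zero function, preserving symmetry). The function $\muh_{\text{bad}}$ is built using knowledge of $P$: at training features it matches the labels (keeping the jackknife residuals $R_i$ small), and at ``fresh'' inputs it outputs values from a set of vanishing $P_Y$-mass. Since $X_{n+1}$ is almost surely fresh under nonatomic $P_X$, the LOO predictions $\muh_{[n]\setminus\{i\}}(X_{n+1})$ then land in this rare set, and the order-statistic structure in~\eqref{eqn:define_jackknife+} forces $[L,U]$ to concentrate inside it as well. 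The training-conditional miscoverage $\alpha_P(\traindata)$ is then bounded below by $1-n^{-2}$ by comparing the $P_Y$-mass of $[L,U]$ to $n^{-2}$.

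The main obstacle is constructing $\muh_{\text{bad}}$ so that jackknife+ actually fails despite its built-in robustness (its marginal guarantee $1-2\alpha$ holds for every symmetric $\alg$). The naive ``interpolate training, sentinel elsewhere'' trick that works for full conformal fails here because the held-out $X_i$ is itself a fresh input in the LOO fit, which balloons $R_i$ and reinflates the interval width back to covering size. Overcoming this will likely require $E$ to constrain the responses $Y_i$ in addition to the features---for instance, enforcing that the $Y_i$'s cluster around a $P$-computable value---so that a (nearly) constant $\muh_{\text{bad}}$ gives simultaneously small $R_i$'s and a tightly concentrated common value of $\muh_{[n]\setminus\{i\}}(X_{n+1})$ lying in a $P_Y$-rare region. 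Once a suitable $(\muh_{\text{bad}},E)$ pair is in hand, the conclusion follows by combining the Hoeffding bound on $\Pp{P^n}{E}$ with the deterministic conclusion $\alpha_P(\traindata)\geq 1-n^{-2}$ on $E$.
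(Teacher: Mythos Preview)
Your proposal correctly isolates the central difficulty---that the held-out point $X_i$ is ``fresh'' for the leave-one-out fit $\muh_{[n]\setminus\{i\}}$, so any ``interpolate at training, sentinel elsewhere'' construction inflates all residuals $R_i$---but the workaround you suggest does not close the gap. Asking $E$ to force the $Y_i$'s to cluster near a $P$-computable constant $c$, and then taking $\muh_{\text{bad}}\approx c$, is self-defeating: if $c$ lies in a $P_Y$-rare region, the event ``all $Y_i$ are near $c$'' has probability $o(1)$, not $\approx\alpha$; and if $c$ lies in a $P_Y$-typical region, then $\muh_{[n]\setminus\{i\}}(X_{n+1})\approx c$ is no longer rare and the interval covers $Y_{n+1}$ with high probability. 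More generally, any symmetric $\alg$ that is (nearly) constant on $\Xcal$ cannot simultaneously make $R_i$ small and push $\muh_{[n]\setminus\{i\}}(X_{n+1})$ into a low-mass region, because the same constant governs both.

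The missing idea is an algebraic device that lets the leave-one-out fit \emph{distinguish} the held-out training point $X_i$ from the test point $X_{n+1}$, despite never having seen either. The paper achieves this by partitioning $\Xcal$ into $M$ equiprobable pieces (using nonatomicity), letting $a(x)\in\{0,\dots,M-1\}$ record the piece, and defining $\alg$ on an $(n-1)$-point input so that $\muh(x)$ depends only on $\mod\big(a(x)+\sum_j a(x_j),\,M\big)$. The point is that when you evaluate $\muh_{[n]\setminus\{i\}}$ at $x=X_i$, the argument becomes $\mod\big(\sum_{j=1}^n a(X_j),M\big)$, which is the \emph{same} for every $i$; the bad event $\Ecal_{\textnormal{mod}}$ (this modular sum lands in a prescribed window of width $\approx\alpha M$, which happens with probability $\approx\alpha$ exactly, not via Hoeffding) then forces $\muh_{[n]\setminus\{i\}}(X_i)=0$ uniformly in $i$, so $R_i=|Y_i|<y_*$ on a high-probability event $\Ecal_{\textnormal{max}}$. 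By contrast, evaluating at $x=X_{n+1}$ gives $\mod\big(a(X_{n+1})+\sum_{j\neq i}a(X_j),M\big)$, which \emph{does} depend on $i$, and a uniform-spread event $\Ecal_{\textnormal{unif}}$ guarantees enough of these miss the window so that $\muh_{[n]\setminus\{i\}}(X_{n+1})=2y_*$ for at least $\lceil(1-\alpha)(n+1)\rceil$ indices, forcing $\Chn(X_{n+1})\subseteq(y_*,\infty)$. Your threshold-based event and near-constant $\muh_{\text{bad}}$ do not supply any mechanism of this kind, so as written the plan would not produce a valid construction.
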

\noindent Thus, as for full conformal, without placing assumptions on $P$ and/or $\alg$   (beyond the standard symmetry assumption),
we cannot ensure that the jackknife+ method will avoid the worst-case
scenario~\eqref{eqn:worstcase}.

In contrast, for CV+, we will now see that the lower bound on marginal coverage, which is $\gtrapprox 1-2\alpha$
as shown in~\eqref{eqn:DF_CV+}, can also be obtained as a training-conditional guarantee.
\begin{theorem}\label{thm:CV+}
	For any integers $K\geq 2$ and $m\geq 1$, and let $n=Km$. 
	Suppose CV+ is run with $K$ folds each of size $m$. Then, 
	for any regression algorithm $\alg$ and any distribution $P$, the $K$-fold CV+ method~\eqref{eqn:define_CV+} satisfies
	\[\Pp{P^n}{\alpha_P(\traindata) \leq 2\alpha +  \sqrt{ \frac{2\log(K/\delta)}{m} }} \geq 1-  \delta\]
	for any $\delta > 0$.
\end{theorem}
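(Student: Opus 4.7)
The plan is to combine a per-fold Hoeffding concentration (union-bounded over the $K$ folds) with a combinatorial reduction of the CV+ miscoverage event to a count of ``bad'' folds. First, unpacking~\eqref{eqn:define_CV+} shows that $Y_{n+1} \notin \Chn(X_{n+1})$ implies $\sum_{k=1}^K Z_k \geq L := \lceil(1-\alpha)(n+1)\rceil$, with
\[Z_k := |\{i \in S_k : R_{n+1}^{(k)} > R_i\}|, \quad R_{n+1}^{(k)} := |Y_{n+1} - \muh_{[n]\setminus S_k}(X_{n+1})|.\]
Let $\hat Q_k$ denote the $\lceil(1-2\alpha)m\rceil$-th order statistic of $\{R_i : i \in S_k\}$, and call fold $k$ \emph{bad} if $R_{n+1}^{(k)} > \hat Q_k$. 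A good fold contributes at most $(1-2\alpha)m$ to $\sum_k Z_k$ and a bad fold at most $m$, so if $G$ counts the bad folds, $\sum_k Z_k \leq (1-2\alpha)n + 2\alpha m G$; combined with $\sum_k Z_k \geq L$, this forces $G \geq K/2$.

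For the per-fold concentration, I would condition on the training data outside $S_k$: the fitted model $\muh_{[n]\setminus S_k}$ is then deterministic and $\{R_i\}_{i \in S_k}$ are i.i.d.\ from the CDF $F_k$ of $|Y - \muh_{[n]\setminus S_k}(X)|$ under $P$. A one-sided Hoeffding inequality (equivalently, a DKW-type bound) on the empirical count $|\{i \in S_k : R_i \leq t\}|$ evaluated at $t = F_k^{-1}(1-2\alpha)$, together with the standard translation from empirical tail to empirical quantile, yields
\[\PPst{F_k(\hat Q_k) < 1 - 2\alpha - \sqrt{\tfrac{2\log(K/\delta)}{m}}}{\text{data outside } S_k} \leq \tfrac{\delta}{K}.\]
A union bound over $k \in [K]$ then produces an event $\mathcal{E}$ with $\Pp{P^n}{\mathcal{E}} \geq 1-\delta$ on which, for every fold $k$,
\[\Ppst{P}{R_{n+1}^{(k)} > \hat Q_k}{\traindata} \;=\; 1 - F_k(\hat Q_k) \;\leq\; 2\alpha + \sqrt{\tfrac{2\log(K/\delta)}{m}}.\]

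The main obstacle is closing the remaining gap. On $\mathcal{E}$, the first step above gives $\alpha_P(\traindata) \leq \Ppst{P}{G \geq K/2}{\traindata}$, while the second step gives $\Ppst{P}{B_k}{\traindata} \leq p := 2\alpha + \sqrt{2\log(K/\delta)/m}$ for each bad-fold indicator $B_k = \{R_{n+1}^{(k)} > \hat Q_k\}$. A naive Markov's inequality via $\mathbb{E}[G \mid \traindata] \leq Kp$ only yields $\Ppst{P}{G \geq K/2}{\traindata} \leq 2p$, which loses a constant factor relative to the bound claimed by the theorem. Getting the sharper constant must exploit the joint structure of $(B_k)_{k \in [K]}$ given $\traindata$---that these events are not arbitrary Bernoullis with prescribed marginals but are all determined by the single random test point $(X_{n+1}, Y_{n+1})$---so that $\Ppst{P}{G \geq K/2}{\traindata}$ inherits a bound proportional to the per-fold probability $p$ directly rather than to its Markov-inflated counterpart.
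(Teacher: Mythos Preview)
Your reduction of the miscoverage event to $\sum_k Z_k \geq L$ and then to ``at least $K/2$ bad folds'' is correct, and your per-fold concentration is fine. But the gap you flag at the end is real and is not closable along the route you have taken. Once you pass from the fold-wise empirical p-values to the binary indicators $B_k = \{R_{n+1}^{(k)} > \hat Q_k\}$, the only information remaining about the joint law of $(B_1,\dots,B_K)$ given $\traindata$ is that each marginal is bounded by $p$. Under that constraint alone, $\PP{G \geq K/2}$ can genuinely be as large as $2p$ (e.g., make the $B_k$ equal in pairs and otherwise disjoint), so no amount of ``exploiting the joint structure'' will improve on Markov after thresholding. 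Your approach therefore yields at best $\alpha_P(\traindata) \leq 4\alpha + 2\sqrt{2\log(K/\delta)/m}$, a factor of two away from the stated theorem.

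The paper avoids the thresholding step entirely. It keeps the full empirical p-values $p_{S_k}(X_{n+1},Y_{n+1}) = 1 - Z_k/m$, so that miscoverage becomes $\frac{1}{K}\sum_k p_{S_k}(X_{n+1},Y_{n+1}) \leq \alpha$. A uniform (DKW) bound replaces each $p_{S_k}$ by its population version $p^*_{S_k}$ up to $\Delta = \sqrt{\log(K/\delta)/(2m)}$ on a $(1-\delta)$-event, and then the key step is the R\"uschendorf--Vovk fact that an \emph{average} of valid (possibly dependent) p-values is a p-value up to a factor of~2: since each $p^*_{S_k}(X_{n+1},Y_{n+1})$ is a valid p-value conditional on $\traindata$, one gets
\[
\Ppst{P}{\tfrac{1}{K}\sum_k p^*_{S_k}(X_{n+1},Y_{n+1}) \leq \alpha + \Delta}{\traindata} \;\leq\; 2(\alpha+\Delta),
\]
which is exactly the claimed $2\alpha + \sqrt{2\log(K/\delta)/m}$. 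This averaging lemma is precisely the ``joint structure'' you were reaching for, but it requires the real-valued p-values, not their thresholded indicators.
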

\noindent As long as the size of each fold, $m=n/K$, is large, the bound on $\alpha_P(\traindata)$ is approximately 
$2\alpha$. Comparing to the marginal result~\eqref{eqn:DF_CV+} for the CV+ method,
we see that the conditional coverage guarantee (for ``most'' training data sets $\traindata$) essentially matches the marginal coverage 
guarantee, and thus could not be improved.  Note also that, as in Theorem~\ref{thm:split_conformal} for split conformal,
we do not need to assume $\alg$ is symmetric, and the result holds regardless of whether $\alg$ is deterministic or randomized.

\section{Proofs}\label{sec:proofs} 
Before proceeding to the proofs, we give some brief intuition for why the split conformal and CV+ methods offer training-conditional
coverage guarantees, while full conformal and jackknife+ do not. For split conformal, the $n_1$ residuals on the holdout set
\[\left\{|Y_i - \muh_{n_0}(X_i)| : i = n_0+1,\dots,n\right\}\]
are i.i.d.~after conditioning on the fitted model $\muh_{n_0}$, and therefore, for large $n_1$, their sample quantiles concentrate
around the corresponding population quantiles. Similarly, for CV+, for each fold $k=1,\dots,K$ we have $m=n/K$ many residuals
\[\left\{|Y_i - \muh_{[n]\backslash S_k}(X_i)|: i \in S_k\right\}\]
that are again i.i.d.~conditional on the $k$-th fitted model $\muh_{[n]\backslash S_k}$, and thus again their 
sample quantiles concentrate
as long as the fold size $m=n/K$ 
is large. This concentration of the sample quantiles (which we formalize in Lemma~\ref{lem:holdout} below)
is the key ingredient for establishing training-conditional coverage. On the other hand,
for both full conformal and jackknife+, there is no independence among residuals calculated in each method---for
example, for jackknife+, in the leave-one-out residuals $R_i = |Y_i - \muh_{[n]\backslash\{i\}}(X_i)|$, 
for two data points $i\neq j$, data point $(X_i,Y_i)$ is used for training when computing the $j$-th residual $R_j$,
and vice versa.

\subsection{Proofs for split conformal and CV+}
We begin by considering the split conformal and CV+ methods,
which both achieve training-conditional coverage.
Both the split conformal result, Theorem~\ref{thm:split_conformal} {\citep[Proposition 2a]{vovk2012conditional}},
and the CV+ result, Theorem~\ref{thm:CV+}, can be proved as consequences of the following lemma.
\begin{lemma}\label{lem:holdout}
Let $n\geq 2$ and choose a holdout set $A$ with $\emptyset\subsetneq A \subsetneq [n]$. Let $\muh_{[n]\backslash A}= \alg\big((X_i,Y_i): i\in [n]\backslash A\big)$,  where $\alg$ is any algorithm and may be deterministic or randomized.
Define
\[p_A(x,y) = \frac{1}{|A|}\sum_{i \in A} \One{|Y_i - \muh_{[n]\backslash A}(X_i)| \geq |y - \muh_{[n]\backslash A}(x)|  },\]
and
\[p^*_A(x,y) = \Ppst{P}{ |Y - \muh_{[n]\backslash A}(X)| \geq |y - \muh_{[n]\backslash A}(x)|  }{\muh_{[n]\backslash A}}.\]
Then $p^*_A(X_{n+1},Y_{n+1})$ is a valid p-value conditional on the training data, i.e.,
\begin{equation}\label{eqn:lemma_pvalue}\Ppst{P}{p^*_A(X_{n+1},Y_{n+1})\leq a}{\traindata}\leq a\textnormal{ for all $a\in[0,1]$},\textnormal{ almost surely over $\traindata$}.\end{equation}
Moreover, for any $\Delta \geq \sqrt{\frac{\log 2}{2|A|}}$,
\begin{equation}\label{eqn:lemma_DKW}\Pp{P^n}{\sup_{(x,y)\in\Xcal\times \R} \left(p^*_A(x,y) - p_A(x,y)\right) >\Delta}\leq e^{-2|A|\Delta^2},\end{equation}
\end{lemma}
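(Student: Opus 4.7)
The plan is to handle the two assertions separately, both by conditioning on the fitted model $\muh_{[n]\backslash A}$ and exploiting the fact that the scores $Z_i = |Y_i - \muh_{[n]\backslash A}(X_i)|$ for $i\in A$ (together with the test-point score) are conditionally i.i.d.

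For the p-value assertion~\eqref{eqn:lemma_pvalue}, I would first note that the test point $(X_{n+1},Y_{n+1})$ is independent of $\traindata$ (and of any randomness $\xi$ used by $\alg$), so conditional on $\traindata$ the model $\muh_{[n]\backslash A}$ is a fixed function and $S := |Y_{n+1} - \muh_{[n]\backslash A}(X_{n+1})|$ has a distribution determined by $\muh_{[n]\backslash A}$. Writing $G(t) = \Ppst{P}{S\geq t}{\muh_{[n]\backslash A}}$, the quantity $p^*_A(X_{n+1},Y_{n+1})$ is exactly $G(S)$, and it is a standard fact that a survival function evaluated at its own argument stochastically dominates $\mathrm{Uniform}[0,1]$, because $\{G(S)\leq a\}\subseteq\{S\geq \inf\{t : G(t)\leq a\}\}$, whose probability is at most $a$. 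This gives~\eqref{eqn:lemma_pvalue}.

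For the uniform bound~\eqref{eqn:lemma_DKW}, the key observation is that both $p_A(x,y)$ and $p^*_A(x,y)$ depend on $(x,y)$ only through the scalar score $t := |y-\muh_{[n]\backslash A}(x)|$, which ranges over all of $[0,\infty)$ as $(x,y)$ ranges over $\Xcal\times\R$. So writing $\widehat{G}(t) = \frac{1}{|A|}\sum_{i\in A}\One{Z_i \geq t}$ for the empirical survival function of the calibration scores, we have $p_A(x,y) = \widehat{G}(t)$ and $p^*_A(x,y) = G(t)$, hence
\[
\sup_{(x,y)\in\Xcal\times\R}\bigl(p^*_A(x,y) - p_A(x,y)\bigr) \;=\; \sup_{t\geq 0}\bigl(G(t) - \widehat{G}(t)\bigr).
\]
Conditional on $\muh_{[n]\backslash A}$ (equivalently, conditional on the data outside $A$ together with any external randomness in $\alg$), the $Z_i$ for $i\in A$ are i.i.d.\ with common survival function $G$, so the one-sided Dvoretzky--Kiefer--Wolfowitz inequality with Massart's sharp constant applies and yields
\[
\Ppst{P^n}{\sup_{t\geq 0}\bigl(G(t) - \widehat{G}(t)\bigr) > \Delta}{\muh_{[n]\backslash A}} \;\leq\; e^{-2|A|\Delta^2}
\]
for every $\Delta \geq \sqrt{\log 2 / (2|A|)}$. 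Taking expectation over $\muh_{[n]\backslash A}$ gives~\eqref{eqn:lemma_DKW}.

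There is no serious obstacle here: the work is really just in recognizing that the supremum over the high-dimensional space $\Xcal\times\R$ collapses to a one-dimensional supremum in the score, so that the problem reduces to a textbook DKW estimate applied conditionally on the fitted model. The only mild technical point is the restriction $\Delta\geq\sqrt{\log 2/(2|A|)}$, which is precisely the regime in which the Massart form of DKW gives the tight exponent without a prefactor; this restriction is inherited by the lemma statement.
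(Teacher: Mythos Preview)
Your proposal is correct and follows essentially the same route as the paper: both arguments identify $p^*_A(X_{n+1},Y_{n+1})$ as the survival function of the residual score evaluated at that score (hence a valid p-value), and both collapse the supremum over $(x,y)\in\Xcal\times\R$ to a one-dimensional supremum in $t=|y-\muh_{[n]\backslash A}(x)|$ before applying the one-sided DKW inequality conditionally on $\muh_{[n]\backslash A}$. The paper states only the inequality $p^*_A(x,y)-p_A(x,y)\leq\sup_t(\cdot)$ rather than equality, but this is immaterial since only the upper bound is needed.
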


Next we will see how this lemma implies the two theorems. First, for split conformal,
Theorem~\ref{thm:split_conformal} is proved by \citet[Proposition 2a]{vovk2012conditional}, but here we reformulate the proof in terms of the above lemma,
to set up intuition for our CV+ proof later on.
\begin{proof}[Proof of Theorem~\ref{thm:split_conformal} {\citep[Proposition 2a]{vovk2012conditional}}]
By definition of the split conformal method~\eqref{eqn:define_split_conformal}, we have
\begin{align*}
\Chn(X_{n+1}) 
&=\left\{y\in\R : \widehat{Q}_{n_1} \geq |y - \muh_{n_0}(X_{n+1})|\right\}\\
&=\left\{y\in\R : \sum_{i = n_0+1}^n \One{|Y_i - \muh_{n_0}(X_i)| <  |y - \muh_{n_0}(X_{n+1})|  } < \lceil(1-\alpha)(n_1+1)\rceil\right\}\\
&\supseteq \left\{y\in\R : \sum_{i = n_0+1}^n \One{ |Y_i - \muh_{n_0}(X_i)| \geq |y - \muh_{n_0}(X_{n+1})|  } >  \alpha n_1\right\}\\
&=  \left\{y\in\R :  p_{[n]\backslash [n_0]}(X_{n+1},y) > \alpha \right\}.
\end{align*}
where $p_{[n]\backslash [n_0]}(x,y)$ is defined as in Lemma~\ref{lem:holdout} by choosing the holdout set $A=[n]\backslash [n_0]$.
Therefore,  we have
\[\alpha_P(\traindata) = \Ppst{P}{Y_{n+1} \not\in \Chn(X_{n+1})}{\traindata}
\leq \Ppst{P}{ p_{[n]\backslash [n_0]}(X_{n+1},Y_{n+1}) \leq \alpha}{\traindata}.\]
Next, fixing any $\Delta>0$, consider the event that $\sup_{(x,y)\in\Xcal\times \R} \left(p^*_{[n]\backslash [n_0]}(x,y) - p_{[n]\backslash [n_0]}(x,y)\right) \leq \Delta$,
which is a function of the training data $\traindata$. On this event, we have
\[\alpha_P(\traindata) \leq \Ppst{P}{ p^*_{[n]\backslash [n_0]}(X_{n+1},Y_{n+1}) \leq \alpha + \Delta}{\traindata} \leq \alpha+\Delta,\]
where the last step holds by~\eqref{eqn:lemma_pvalue}. In other words, so far we have shown that
\[\Pp{P^n}{\alpha_P(\traindata)>\alpha+\Delta}\leq \Pp{P^n}{\sup_{(x,y)\in\Xcal\times \R} \left(p^*_{[n]\backslash [n_0]}(x,y) - p_{[n]\backslash [n_0]}(x,y)\right)>\Delta}.\]
Finally, applying~\eqref{eqn:lemma_DKW}, this probability is bounded by $\delta$ when we choose $\Delta =\sqrt{\frac{\log(1/\delta)}{2n_1}}$.
\end{proof}

Next, we prove Theorem~\ref{thm:CV+} for the CV+ method.
\begin{proof}[Proof of Theorem~\ref{thm:CV+}]
As in the definition of the CV+ method~\eqref{eqn:define_CV+},
we let $R_i = |Y_i - \muh_{[n]\backslash S_{k(i)}}(X_i)|$ for each $i\in[n]$. 
Following \citet[Proof of Theorem 4]{barber2021predictive},
  the CV+ prediction interval defined in~\eqref{eqn:define_CV+} deterministically satisfies
\begin{align*}
\Chn(X_{n+1})
& \supseteq \left\{y\in\R : \sum_{i=1}^n \One{|y -  \muh_{[n]\backslash S_{k(i)}}(X_{n+1})| > R_i} < (1-\alpha)(n+1)\right\}\\
& \supseteq \left\{y\in\R : \sum_{i=1}^n \One{R_i \geq |y -  \muh_{[n]\backslash S_{k(i)}}(X_{n+1})| } > \alpha n\right\}\\
&=  \left\{y\in\R :  \sum_{k=1}^K \sum_{i\in S_k} \One{R_i \geq |y -  \muh_{[n]\backslash S_k}(X_{n+1})| } > \alpha n\right\}\\
&=  \left\{y\in\R :  \frac{1}{K}\sum_{k=1}^K p_{S_k}(X_{n+1},y) > \alpha \right\},
\end{align*}
where for each fold $k$, $p_{S_k}(x,y)$ is defined as in Lemma~\ref{lem:holdout} by choosing the holdout set $A= S_k$.
Therefore,  we have
\[\alpha_P(\traindata) = \Ppst{P}{Y_{n+1} \not\in \Chn(X_{n+1})}{\traindata}
\leq \Ppst{P}{\frac{1}{K}\sum_{k=1}^K p_{S_k}(X_{n+1},Y_{n+1}) \leq \alpha}{\traindata}.\]
Next, fixing any $\Delta>0$, consider the event that
 $\max_k \sup_{(x,y)\in\Xcal\times \R} \left(p^*_{S_k}(x,y) - p_{S_k}(x,y)\right) \leq \Delta$, 
 which is a function of the training data $\traindata$. On this event, we have
\[\alpha_P(\traindata) \leq \Ppst{P}{ \frac{1}{K}\sum_{k=1}^Kp^*_{S_k}(X_{n+1},Y_{n+1}) \leq \alpha + \Delta}{\traindata} \leq 2\alpha+2\Delta,\]
where the last step holds 
because each $p^*_{S_k}(X_{n+1},Y_{n+1})$ is a valid p-value conditional on $\traindata$ by~\eqref{eqn:lemma_pvalue},
and the average of valid p-values 
is itself a p-value up to a factor of 2 \citep{ruschendorf1982random,vovk2020combining}. 
Combining everything so far, we have shown that
\begin{multline*}\Pp{P^n}{\alpha_P(\traindata)>2\alpha+2\Delta}\leq \Pp{P^n}{\max_k\sup_{(x,y)\in\Xcal\times \R} \left(p^*_{S_k}(x,y) - p_{S_k}(x,y)\right)>\Delta}\\
\leq \sum_{k=1}^K \Pp{P^n}{\sup_{(x,y)\in\Xcal\times \R} \left(p^*_{S_k}(x,y) - p_{S_k}(x,y)\right)>\Delta},\end{multline*}
where for the last step we take a union bound.
Finally, applying~\eqref{eqn:lemma_DKW} to bound this probability for each fold $k$, the above quantity
 is bounded by $\delta$ when we choose $\Delta =\sqrt{\frac{\log(K/\delta)}{2m}}$.
\end{proof}

To conclude this section, we now prove the supporting lemma.
\begin{proof}[Proof of Lemma~\ref{lem:holdout}]
First, for any fixed function $\mu:\Xcal\rightarrow\R$, define
\[\bar{F}_\mu(t) = \Pp{P}{|Y-\mu(X)|\geq t}.\]
In other words, $\bar{F}_\mu$ is right-tailed CDF of $|Y-\mu(X)|$ under $(X,Y)\sim P$.
We can therefore write
\[p^*_A(X_{n+1},Y_{n+1}) = \bar{F}_{\muh_{[n]\backslash A}}(|Y_{n+1} - \muh_{[n]\backslash A}(X_{n+1})|).\]
Since $(X_{n+1},Y_{n+1})\sim P$ (and is independent of $\muh_{[n]\backslash A}$), this is clearly a valid p-value by definition of $\bar{F}_{\muh_{[n]\backslash A}}$, and so we have proved~\eqref{eqn:lemma_pvalue}.

Next, for any $(x,y)\in\Xcal\times\R$, we can calculate
\begin{multline*}
p^*_A(x,y) - p_A(x,y)\\
=\bar{F}_{\muh_{[n]\backslash A}}(|y - \muh_{[n]\backslash A}(x)|) - \frac{1}{|A|}\sum_{i \in A} \One{|Y_i - \muh_{[n]\backslash A}(X_i)| \geq |y - \muh_{[n]\backslash A}(x)|  }\\
\leq\sup_{t\in\R}\left(\bar{F}_{\muh_{[n]\backslash A}}(t) - \frac{1}{|A|}\sum_{i\in A}\One{|Y_i - \muh_{[n]\backslash A}(X_i)| \geq t}\right) .\end{multline*}
Finally, since $\{(X_i,Y_i)\}_{i\in A}$ are drawn i.i.d.~from $P$ and are independent from $\muh_{[n]\backslash A}$,
for any $\Delta \geq \sqrt{\frac{\log 2}{2|A|}}$
the Dvoretzky--Kiefer--Wolfowitz inequality implies that, conditional on $\muh_{[n]\backslash A}$,
\[\sup_{t\in\R}\left( \bar{F}_{\muh_{[n]\backslash A}}(t) - \frac{1}{|A|}\sum_{i\in A}\One{|Y_i - \muh_{[n]\backslash A}(X_i)| \geq t}\right) \leq  \Delta\]
holds with probability at least $1-e^{-2|A|\Delta^2}$. The same bound therefore holds
marginally as well. This proves~\eqref{eqn:lemma_DKW}.
\end{proof}

\subsection{Proofs for full conformal and jackknife+}
Next, we turn to the results for full conformal and for jackknife+, where we show that
training-conditional coverage cannot be guaranteed without further assumptions.
The proofs for the two methods are closely related and share the same structure.

First, fix some large integer $M$ (which we will specify later), and partition $\Xcal$ into $M$ sets, $\Xcal = A_0 \cup A_1 \cup \dots \cup A_{M-1}$, 
where $\Pp{P}{X\in A_m} = \frac{1}{M}$ for each $m=0,\dots,M-1$ (since we have assumed $X$ is nonatomic
under the distribution $P$, such a partition exists by \citet[Proposition A.1]{dudley2011concrete}). 
Define  a map $a:\Xcal\rightarrow\{0,\dots,M-1\}$,
\[a(x) = \begin{cases}0,&x\in A_0,\\1 , & x \in A_1,\\\dots \\ M-1, &x\in A_{M-1},\end{cases}\]
assigning each $x\in\Xcal$ to a particular set in the partition. Then, by our choice of the $A_m$'s, we see that
\[a(X)\sim\textnormal{Unif}\{0,\dots,M-1\}\]
under the distribution $P$. By extension, $\mod(\sum_{i=1}^{n} a(X_i),M) \sim \textnormal{Unif}\{0,...,M-1\}$. For the sake of illustration, consider a ``clock'' partitioned into $M$ segments and a hand whose position represents the value of the modulo. In this case, the hand moves forward by $a(X_i)$ segments when the $i$-th term is added inside the modulo---see Figure~\ref{fig:clock1} for an illustration.

\begin{figure}[t]\centering
 \begin{tikzpicture}[line cap=rect,line width=1pt, scale=0.9]
    \filldraw [fill=white] (0,0) circle [radius=2.5cm];
    \foreach \angle in {60,30,0,-30,-60,-90,-120,-210,-240,-270}
    {
      \draw[line width=0.8pt] (0,0) -- (\angle:2.5cm);
    }
    \node at (75 : 2cm) {\textnormal{0}};
    \node at (45 : 2cm) {\textnormal{1}};
    \node at (15 : 2cm) {\textnormal{2}};
    \node at (-15 : 2cm) {\textnormal{3}};
    \node at (-45 : 2cm) {\textnormal{4}};
    \node at (-75 : 2cm) {\textnormal{5}};
    \node at (-105 : 2cm) {\textnormal{6}};
    \node at (105 : 2cm) {\textnormal{$M$-1}};
    \node at (135 : 2cm) {\textnormal{$M$-2}};
    \foreach \angle in {180,195, 210}
    {
        \node at (\angle : 2cm) [circle,fill,inner sep=0.8pt]{};
    }
    \draw[very thick,color=red] (-45:-1cm) -- (-45:2.4cm);
    \draw[line width=0.1cm,color=red] (-45:-1cm) -- (-45:-0.3cm);
    \draw[ ->] (0.258819*2.7, 0.9659258*2.7) arc (75:-45:2.7cm);
    \draw[fill=black] (0,0) circle (0.04cm);
    \draw [color=white] (0,0) circle [radius=2.9cm];
    \end{tikzpicture}\hspace{-.15in}
    \begin{tikzpicture}[line cap=rect,line width=1pt,scale=0.9]
    \filldraw [fill=white] (0,0) circle [radius=2.5cm];
    \foreach \angle in {60,30,0,-30,-60,-90,-120,-210,-240,-270}
    {
      \draw[line width=0.8pt] (0,0) -- (\angle:2.5cm);
    }
    \node at (75 : 2cm) {\textnormal{0}};
    \node at (45 : 2cm) {\textnormal{1}};
    \node at (15 : 2cm) {\textnormal{2}};
    \node at (-15 : 2cm) {\textnormal{3}};
    \node at (-45 : 2cm) {\textnormal{4}};
    \node at (-75 : 2cm) {\textnormal{5}};
    \node at (-105 : 2cm) {\textnormal{6}};
    \node at (105 : 2cm) {\textnormal{$M$-1}};
    \node at (135 : 2cm) {\textnormal{$M$-2}};
    \foreach \angle in {180,195, 210}
    {
        \node at (\angle : 2cm) [circle,fill,inner sep=0.8pt]{};
    }
    \draw[very thick,color=red] (45:-1cm) -- (45:2.4cm);
    \draw[line width=0.1cm,color=red] (45:-1cm) -- (45:-0.3cm);
    
    \draw[ ->] (0.7071068*2.7, -0.7071068*2.7) arc (-45 : -315 : 2.7cm);
    \draw[fill=black] (0,0) circle (0.04cm);
    \draw [color=white] (0,0) circle [radius=2.9cm];
    \end{tikzpicture}\hspace{-.2in}
    \begin{tikzpicture}[line cap=rect,line width=1pt,scale=0.9]
    \filldraw [fill=white] (0,0) circle [radius=2.5cm];
    \foreach \angle in {60,30,0,-30,-60,-90,-120,-210,-240,-270}
    {
      \draw[line width=0.8pt] (0,0) -- (\angle:2.5cm);
    }
    \node at (75 : 2cm) {\textnormal{0}};
    \node at (45 : 2cm) {\textnormal{1}};
    \node at (15 : 2cm) {\textnormal{2}};
    \node at (-15 : 2cm) {\textnormal{3}};
    \node at (-45 : 2cm) {\textnormal{4}};
    \node at (-75 : 2cm) {\textnormal{5}};
    \node at (-105 : 2cm) {\textnormal{6}};
    \node at (105 : 2cm) {\textnormal{$M$-1}};
    \node at (135 : 2cm) {\textnormal{$M$-2}};
    \foreach \angle in {180,195, 210}
    {
        \node at (\angle : 2cm) [circle,fill,inner sep=0.8pt]{};
    }
    \draw[very thick,color=red] (15:-1cm) -- (15:2.4cm);
    \draw[line width=0.1cm,color=red] (15:-1cm) -- (15:-0.3cm);
    
    \draw[ ->] (0.7071068*2.7, 0.7071068*2.7) arc (45:15:2.7cm);
    \draw[fill=black] (0,0) circle (0.04cm);
    \draw [color=white] (0,0) circle [radius=2.9cm];
    \end{tikzpicture}
	\caption{Representation of $\mod(\sum_{i=1}^{n} a(X_i),M)$ for $n=3$, 
	when $a(X_1) = 4$, $a(X_2) = M-3$, and $a(X_3) = 1$. The left plot shows moving from $0$ to $\mod(a(X_1),M)$,
	the middle plot shows moving from $\mod(a(X_1),M)$ to $\mod(a(X_1) + a(X_2),M)$,
	and the right plot shows moving from $\mod(a(X_1) + a(X_2),M)$ to $\mod(a(X_1) + a(X_2) + a(X_3),M)$.}
	\label{fig:clock1}
\end{figure}
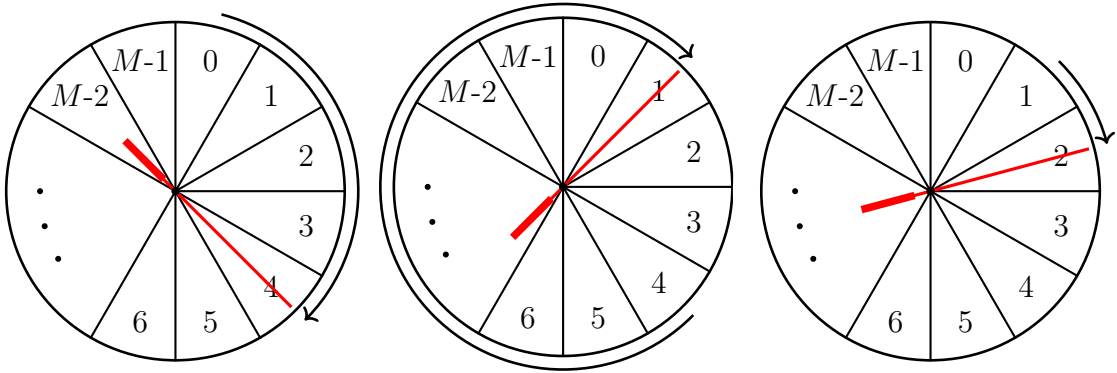

Next, we will define three events, $\Ecal_{\textnormal{max}}$, $\Ecal_{\textnormal{mod}}$, and $\Ecal_{\textnormal{unif}}$,
which are all functions of the training data $\traindata$.
Define
 \[y_* = \textnormal{ the $(1-n^{-2})$-quantile of $|Y|$ under distribution $P$},\]
and let
\[M_1 =\left\lfloor M\left(\alpha - \sqrt{\frac{2\log n}{n}} - \frac{2}{n}\right)\right\rfloor \approx \alpha M.\]
(Note that we can assume $n$ is sufficiently
large so that $\alpha - \sqrt{\frac{2\log n}{n}} - \frac{2}{n} >0$, and thus $M_1\geq 0$, since if this does not hold
then the results of the theorems hold trivially.)
With these values fixed, the three events are defined as follows:
\begin{itemize}
\item Let  $\Ecal_{\textnormal{max}}$ be the event that $\max_{i\in[n]}|Y_i| < y_*$.
\item Let $\Ecal_{\textnormal{mod}}$ be the event that $\mod(\sum_{i=1}^n a(X_i),M) < M_1$.
\item Let  $\Ecal_{\textnormal{unif}}$ be the event that 
 $\sum_{i=1}^n \One{\mod(a(X_i) + m,M) < M - M_1} \geq  \lceil (1-\alpha)(n+1)\rceil$
	for all integers $m$.
\end{itemize}
Figures~\ref{fig:clock2} and~\ref{fig:clock3} illustrate the events  $\Ecal_{\textnormal{mod}}$ and  $\Ecal_{\textnormal{unif}}$, 
respectively.

\begin{figure}[t]\centering
 \begin{tikzpicture}[line cap=rect,line width=1pt, scale=0.9]
    \filldraw [fill=white] (0,0) circle [radius=2.5cm];
 \draw[fill=cyan!20] (0,0)-- +(-30:2.5cm) arc (-30:90:2.5cm) -- cycle;
    \foreach \angle in {60,30,0,-30,-60,-90,-210,-240,-270}
    {
      \draw[line width=0.8pt] (0,0) -- (\angle:2.5cm);
    }
    \node at (75 : 2cm) {\textnormal{0}};
    \node at (45 : 2cm) {\textnormal{1}};
    \node at (-15 : 1.9cm) {\textnormal{\footnotesize$M_1$-1}};
    \node at (-45 : 2cm) {\textnormal{\footnotesize$M_1$}};
    \node at (-75 : 2.05cm) {\textnormal{\footnotesize$M_1$+1}};
    \node at (105 : 2cm) {\textnormal{$M$-1}};
    \node at (135 : 2cm) {\textnormal{$M$-2}};
    \foreach \angle in {195, 210, 225, 5, 15, 25}
    {
        \node at (\angle : 2cm) [circle,fill,inner sep=0.8pt]{};
    }
    \draw[fill=black] (0,0) circle (0.04cm);
        \draw [color=white] (0,0) circle [radius=3cm];
    \end{tikzpicture}
	\caption{An illustration of the event $\Ecal_{\textnormal{mod}}$,
	which is the event that $\mod(\sum_{i=1}^n a(X_i), M) < M_1$ for $M_1\approx \alpha M$.
	In the figure, the event $\Ecal_{\textnormal{mod}}$
	holds if and only if the value $\mod(\sum_{i=1}^n a(X_i), M)$ lands in the shaded region of
	the ``clock''.}
	\label{fig:clock2}
\end{figure}

\begin{figure}[t]\centering
 \begin{tikzpicture}[line cap=rect,line width=1pt, scale=0.9]
    \filldraw [fill=white] (0,0) circle [radius=2.5cm];
 \draw[fill=cyan!20] (0,0)-- +(-150:2.5cm) arc (-150:90:2.5cm) -- cycle;
    \foreach \angle in {60,30,-60,-90,-120,-150,-180,-210,-240,-270}
    {
      \draw[line width=0.8pt] (0,0) -- (\angle:2.5cm);
    }
    \node at (75 : 2cm) {\textnormal{0}};
    \node at (45 : 2cm) {\textnormal{1}};
    \node at (-75 : 2.15cm) {\textnormal{\scriptsize$M$\!\!\! - \!\!$M_1$\!-3}};
        \node at (-106 : 2.15cm) {\textnormal{\scriptsize$M$\!\!\! - \!\!$M_1$\!-\!2}};
        \node at (-138 : 2cm) {\textnormal{\scriptsize$M$\!\! - \!\!$M_1$\!-\!-1}};
       \node at (-165 : 1.95cm) {\textnormal{\scriptsize$M$\!\! - \!\!$M_1$}};
    \node at (105 : 2cm) {\textnormal{$M$-1}};
        \node at (135 : 2cm) {\textnormal{$M$-2}};
    \foreach \angle in {155, 165, 175,  0, -15, -30}
    {
        \node at (\angle : 2cm) [circle,fill,inner sep=0.8pt]{};
    }
    \draw[fill=black] (0,0) circle (0.04cm);
        \draw [color=white] (0,0) circle [radius=2.9cm];
    \end{tikzpicture}\hspace{-.15in}
     \begin{tikzpicture}[line cap=rect,line width=1pt, scale=0.9]
    \filldraw [fill=white] (0,0) circle [radius=2.5cm];
 \draw[fill=cyan!20] (0,0)-- +(-120:2.5cm) arc (-120:120:2.5cm) -- cycle;
    \foreach \angle in {60,30,-60,-90,-120,-150,-180,-210,-240,-270}
    {
      \draw[line width=0.8pt] (0,0) -- (\angle:2.5cm);
    }
    \node at (75 : 2cm) {\textnormal{0}};
    \node at (45 : 2cm) {\textnormal{1}};
    \node at (-75 : 2.15cm) {\textnormal{\scriptsize$M$\!\!\! - \!\!$M_1$\!-3}};
        \node at (-106 : 2.15cm) {\textnormal{\scriptsize$M$\!\!\! - \!\!$M_1$\!-\!2}};
        \node at (-138 : 2cm) {\textnormal{\scriptsize$M$\!\! - \!\!$M_1$\!-\!-1}};
       \node at (-165 : 1.95cm) {\textnormal{\scriptsize$M$\!\! - \!\!$M_1$}};
    \node at (105 : 2cm) {\textnormal{$M$-1}};
        \node at (135 : 2cm) {\textnormal{$M$-2}};
    \foreach \angle in {155, 165, 175,  0, -15, -30}
    {
        \node at (\angle : 2cm) [circle,fill,inner sep=0.8pt]{};
    }
    \draw[fill=black] (0,0) circle (0.04cm);
        \draw [color=white] (0,0) circle [radius=2.9cm];
    \end{tikzpicture}\hspace{-.15in}
     \begin{tikzpicture}[line cap=rect,line width=1pt, scale=0.9]
    \filldraw [fill=white] (0,0) circle [radius=2.5cm];
 \draw[fill=cyan!20] (0,0)-- +(-90:2.5cm) arc (-90:150:2.5cm) -- cycle;
    \foreach \angle in {60,30,-60,-90,-120,-150,-180,-210,-240,-270}
    {
      \draw[line width=0.8pt] (0,0) -- (\angle:2.5cm);
    }
    \node at (75 : 2cm) {\textnormal{0}};
    \node at (45 : 2cm) {\textnormal{1}};
    \node at (-75 : 2.15cm) {\textnormal{\scriptsize$M$\!\!\! - \!\!$M_1$\!-3}};
        \node at (-106 : 2.15cm) {\textnormal{\scriptsize$M$\!\!\! - \!\!$M_1$\!-\!2}};
        \node at (-138 : 2cm) {\textnormal{\scriptsize$M$\!\! - \!\!$M_1$\!-\!-1}};
       \node at (-165 : 1.95cm) {\textnormal{\scriptsize$M$\!\! - \!\!$M_1$}};
    \node at (105 : 2cm) {\textnormal{$M$-1}};
        \node at (135 : 2cm) {\textnormal{$M$-2}};
    \foreach \angle in {155, 165, 175,  0, -15, -30}
    {
        \node at (\angle : 2cm) [circle,fill,inner sep=0.8pt]{};
    }
    \draw[fill=black] (0,0) circle (0.04cm);
        \draw [color=white] (0,0) circle [radius=2.9cm];
    \end{tikzpicture}
\caption{An illustration of the event $\Ecal_{\textnormal{unif}}$,
	which is the event that $\mod(a(X_i) + m, M) < M - M_1$ holds for at least $\lceil (1-\alpha)(n+1)\rceil$ many 
	training data points $i$, for every integer $m$.
	In the figure, the shaded region $\{a\in \{0,\dots,M-1\} : \mod(a + m, M) < M - M_1\}$ is shown for $m=0$ (left figure), $m=1$ (center figure), and $m=2$ (right figure). 
	The event holds if and only if at least  $\lceil (1-\alpha)(n+1)\rceil$ many 
	training data points $i\in[n]$ have $a(X_i)$ lying in the shaded region, for each integer $m$ (i.e., for each of the three displayed
	figures, as well as all other possible values of $m$).
}
	\label{fig:clock3}
\end{figure}
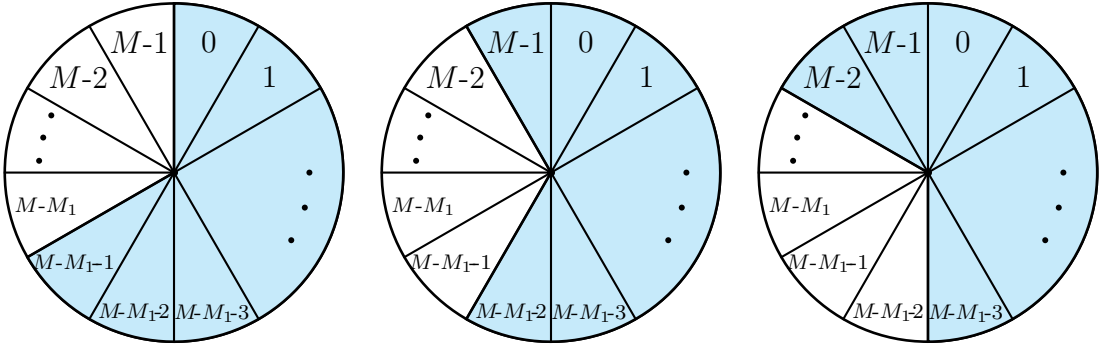

The following result shows that, with probability at least $\approx \alpha$, all three events occur:
\begin{lemma}\label{lem:three_events}
Under the definitions and notation above, for $(X_1,Y_1),\dots,(X_n,Y_n)\iidsim P$, we have
\[\Pp{P^n}{\Ecal_{\textnormal{mod}}} \geq \alpha - \sqrt{\frac{2\log n}{n}} - \frac{2}{n} - \frac{1}{M},\quad 
\Pp{P^n}{\Ecal_{\textnormal{max}}} \geq 1 - \frac{1}{n},\quad 
\Pp{P^n}{\Ecal_{\textnormal{unif}}} \geq 1 - \frac{2}{n},\]
and therefore,
\[\Pp{P^n}{\Ecal_{\textnormal{mod}}\cap \Ecal_{\textnormal{max}}\cap \Ecal_{\textnormal{unif}}}\geq \alpha - 
 \sqrt{\frac{2\log n}{n}} - \frac{1}{M} - \frac{5}{n}.\]
\end{lemma}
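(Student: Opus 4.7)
The plan is to bound each of the three events separately and then combine them via a union bound. The event $\Ecal_{\textnormal{max}}$ is immediate: by the definition of $y_*$ as the $(1-n^{-2})$-quantile of $|Y|$, each $|Y_i|\geq y_*$ with probability at most $n^{-2}$, so a union bound over $i\in[n]$ yields $\Pp{P^n}{\Ecal_{\textnormal{max}}^c}\leq 1/n$.

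For $\Ecal_{\textnormal{mod}}$, the key observation is that the partition $A_0,\dots,A_{M-1}$ is chosen so that each $a(X_i)$ is \emph{exactly} uniform on $\{0,\dots,M-1\}$. Since convolution with the uniform distribution on the cyclic group $\mathbb{Z}/M\mathbb{Z}$ always returns the uniform distribution, an easy induction shows that $\mod(\sum_{i=1}^n a(X_i),M)$ is itself uniform on $\{0,\dots,M-1\}$. Hence $\Pp{P^n}{\Ecal_{\textnormal{mod}}}=M_1/M$ exactly, and combining with $M_1\geq M(\alpha-\sqrt{2\log n/n}-2/n)-1$ gives the claimed lower bound.

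For $\Ecal_{\textnormal{unif}}$, I would rephrase the event in contrapositive form: $\Ecal_{\textnormal{unif}}$ fails iff some cyclic interval $J_m:=\{a:\mod(a+m,M)\geq M-M_1\}$ of length $M_1$ contains more than $n-\lceil(1-\alpha)(n+1)\rceil$ of the values $a(X_1),\dots,a(X_n)$. For a fixed $m$, the count $W_m=\sum_i\One{a(X_i)\in J_m}$ is a Bernoulli sum with mean $nM_1/M\leq n\alpha-\sqrt{2n\log n}-2$, while the threshold satisfies $n-\lceil(1-\alpha)(n+1)\rceil\geq\alpha n-2$; thus the gap between the threshold and the mean is at least $\sqrt{2n\log n}$. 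Hoeffding's inequality then yields a tail probability of at most $\exp(-2(\sqrt{2n\log n})^2/n)=n^{-4}$ per $m$, and union-bounding over the $M$ distinct cyclic intervals $J_m$ gives $\Pp{P^n}{\Ecal_{\textnormal{unif}}^c}\leq M/n^4\leq 2/n$ in the polynomial-in-$n$ regime for $M$ that will appear in the theorem proofs. Finally, combining the three bounds through $\Pp{P^n}{\Ecal_{\textnormal{mod}}\cap\Ecal_{\textnormal{max}}\cap\Ecal_{\textnormal{unif}}}\geq\Pp{P^n}{\Ecal_{\textnormal{mod}}}-\Pp{P^n}{\Ecal_{\textnormal{max}}^c}-\Pp{P^n}{\Ecal_{\textnormal{unif}}^c}$ delivers the stated estimate. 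The delicate step is the tuning of $M_1$: it must be close enough to $\alpha M$ to push $\Pp{P^n}{\Ecal_{\textnormal{mod}}}$ up to $\approx\alpha$, yet small enough that Hoeffding's $\sqrt{2n\log n}$ slack can simultaneously cover the gap to the threshold $\approx\alpha n$ in $\Ecal_{\textnormal{unif}}$ and absorb the union bound over $m$.
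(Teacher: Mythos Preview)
Your treatment of $\Ecal_{\textnormal{max}}$ and $\Ecal_{\textnormal{mod}}$ matches the paper exactly. The difference lies in $\Ecal_{\textnormal{unif}}$: the paper couples $a(X_i)$ with $\lfloor MU_i\rfloor$ for $U_i\iidsim\textnormal{Unif}[0,1]$ and applies the Dvoretzky--Kiefer--Wolfowitz inequality to the empirical CDF of the $U_i$'s, which controls \emph{all} cyclic shifts $m$ simultaneously and yields the bound $\Pp{P^n}{\Ecal_{\textnormal{unif}}^c}\leq 2/n$ with no dependence on $M$. Your route---Hoeffding per $m$, then a union bound over the $M$ distinct shifts---is also valid and your arithmetic checks out, but it produces $\Pp{P^n}{\Ecal_{\textnormal{unif}}^c}\leq M n^{-4}$ rather than $2/n$.

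This means your argument does not establish the lemma \emph{as stated}, since the claimed bound $1-2/n$ is asserted for arbitrary $M$, whereas yours needs $M\leq 2n^3$. You are right that this does no harm downstream: in the theorem proofs $M$ is only required to be large enough to make $1/M$ negligible, so one could simply take $M$ of order $n$ or $n^2$ and both constraints are satisfied. The advantage of the paper's DKW argument is that it decouples the lemma from any restriction on $M$, so the ``choose $M$ sufficiently large'' step after the lemma can be made without revisiting the $\Ecal_{\textnormal{unif}}$ bound; your approach would require stating the lemma with an explicit $M$-dependent bound or with a side condition on $M$.
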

\noindent By choosing $M$ to be sufficiently large, then, we obtain
\begin{equation}\label{eqn:three_events}\Pp{P^n}{\Ecal_{\textnormal{mod}}\cap \Ecal_{\textnormal{max}}\cap \Ecal_{\textnormal{unif}}}\geq \alpha - 
 6\sqrt{\frac{\log n}{n}}.\end{equation}

Now it remains to be shown that, for both full conformal prediction and for jackknife+,
we can find an algorithm $\alg$ such that, if
the events $\Ecal_{\textnormal{max}}$, $\Ecal_{\textnormal{mod}}$, and $\Ecal_{\textnormal{unif}}$ all hold,
then the training-conditional miscoverage rate $\alpha_P(\traindata)$ is close to 1.

\begin{proof}[Proof of Theorem~\ref{thm:full_conformal}]
For full conformal, 
 we define a symmetric regression algorithm $\alg$ that maps a data set $\{(x_1,y_1),\dots,(x_{n+1},y_{n+1})\}$ to the fitted function
\[\muh(x) = \begin{cases}
2y_*, & \textnormal{ if } \mod(-a(x) + \sum_{i=1}^{n+1} a(x_i),M)< M_1,\\
0, &\textnormal{ otherwise}.
\end{cases}\]
Below, we will show that, for any training data set $\traindata$,
\begin{equation}\label{eqn:three_events_step}\textnormal{
If $\Ecal_{\textnormal{max}}\cap \Ecal_{\textnormal{mod}}\cap \Ecal_{\textnormal{unif}}$ holds,
then $\Chn(X_{n+1})\subseteq(y_*,\infty)$ almost surely over $X_{n+1}$.}\end{equation}
By definition of $y_*$, we therefore have
\[\Ppst{P}{Y_{n+1} \in\Chn(X_{n+1})}{\traindata}\leq \Ppst{P}{Y_{n+1} \in( y_*,\infty)}{\traindata}= \Pp{P}{Y_{n+1}> y_*}\leq n^{-2},\]
for any $\traindata$ such that $\Ecal_{\textnormal{max}}\cap \Ecal_{\textnormal{mod}}\cap \Ecal_{\textnormal{unif}}$ holds.
Combining this with the bound~\eqref{eqn:three_events}, we have proved that $\Pp{P^n}{\alpha_P(\traindata)\geq 1-n^{-2}}\geq \alpha -  6\sqrt{\frac{\log n}{n}}$, as desired.

To complete the proof, we now verify~\eqref{eqn:three_events_step}. 
Condition on the training data $\traindata$, and assume $\Ecal_{\textnormal{max}}\cap \Ecal_{\textnormal{mod}}\cap \Ecal_{\textnormal{unif}}$ holds.
First, by $\Ecal_{\textnormal{mod}}$, we have
\[\mod\left(-a(X_{n+1}) + \sum_{i=1}^{n+1} a(X_i), M\right) = 
\mod\left(\sum_{i=1}^n a(X_i), M\right)  < M_1\]
for any value of $X_{n+1}$, 
and therefore, for any $y\in\R$,
\[\muh^y_{n+1} (X_{n+1}) = 2y_*\]
where as before, $\muh^y_{n+1} = \alg\big((X_1,Y_1),\dots,(X_n,Y_n),(X_{n+1},y)\big)$.
On the other hand, for any $i\in[n]$, define
\[M_i = \mod\left(-a(X_i) + \sum_{j=1}^{n+1} a(X_j), M\right) = M-1 - \mod\left(a(X_i) - \sum_{j=1}^{n+1} a(X_j) - 1, M\right) ,\]
where the last step holds since $\mod(k,M) = M-1-\mod(-k-1,M)$ for all integers $k$.
By the event $\Ecal_{\textnormal{unif}}$ (applied with $m=- \sum_{j=1}^{n+1} a(X_j) - 1$), 
we see that $M_i \geq M_1$ for at least $\lceil (1-\alpha)(n+1)\rceil$ many $i\in [n]$. Therefore,
for all $y\in\R$,
$\muh^y_{n+1}(X_i) = 0$  for at least $\lceil (1-\alpha)(n+1)\rceil$ many $i\in [n]$.
Next, by $\Ecal_{\textnormal{max}}$,
we have $|Y_i|< y_*$ for all $i\in[n]$, and therefore, for all $y\in\R$, $R^y_i < y_*$  for at least $\lceil (1-\alpha)(n+1)\rceil$ many $i\in [n]$.

Returning to the definition of full conformal prediction given in~\eqref{eqn:define_full_conformal},
we therefore have $\widehat{Q}^y_{n+1} < y_*$. Therefore, $y\in\Chn(X_{n+1})$ can hold
only if $|y - \muh^y_{n+1}(X_{n+1})| < y_*$, which implies $\Chn(X_{n+1})\subseteq (y_*,3y_*)\subseteq (y_*,\infty)$.
This verifies~\eqref{eqn:three_events_step}, and thus completes the proof of the theorem.\end{proof}

\begin{proof}[Proof of Theorem~\ref{thm:jackknife+}]
For jackknife+,
 we define a symmetric regression algorithm $\alg$ that maps a data set $\{(x_1,y_1),\dots,(x_{n-1},y_{n-1})\}$ to the fitted function
\[\muh(x) = \begin{cases}
0, & \textnormal{ if } \mod(a(x) + \sum_{i=1}^{n-1}a(x_i) ,M)< M_1,\\
2y_*, &\textnormal{ otherwise}.
\end{cases}\]
As in the proof of Theorem~\ref{thm:full_conformal}, 
it is sufficient to verify that~\eqref{eqn:three_events_step} again holds in this case.

Condition on the training data $\traindata$, and assume $\Ecal_{\textnormal{max}}\cap \Ecal_{\textnormal{mod}}\cap \Ecal_{\textnormal{unif}}$ holds.
First, by $\Ecal_{\textnormal{mod}}$, for all $i\in[n]$,  we have
\[\mod\left(a(X_i) + \sum_{j \in [n]\backslash\{i\}} a(X_j), M\right) = 
\mod\left(\sum_{j=1}^n a(X_j), M\right)  < M_1,\]
and therefore
\[\muh_{[n]\backslash \{i\}} (X_i) =0.\]
By $\Ecal_{\textnormal{max}}$,
we have $|Y_i|< y_*$ for all $i\in[n]$, and therefore, $R_i = |Y_i - \muh_{[n]\backslash\{i\}}(X_i)| < y_*$  for all $i\in[n]$.

On the other hand, for any $i\in[n]$, define
\[M_i = \mod\left(a(X_{n+1}) + \sum_{j\in[n]\backslash\{i\}} a(X_j), M\right) 
 = \mod\left( - a(X_i) + \sum_{j=1}^{n+1} a(X_j), M\right) .\]
Exactly as in the proof of Theorem~\ref{thm:full_conformal},
by the event $\Ecal_{\textnormal{unif}}$ 
we see that $M_i \geq M_1$ for at least $\lceil (1-\alpha)(n+1)\rceil$ many $i\in [n]$. Therefore,
$\muh_{[n]\backslash\{i\}}(X_{n+1}) = 2y_*$  for at least $\lceil (1-\alpha)(n+1)\rceil$ many $i\in [n]$. 

Combining these calculations, we
see that $\muh_{[n]\backslash\{i\}}(X_{n+1}) - R_i > y_*$ for  at least $\lceil (1-\alpha)(n+1)\rceil$ many $i\in [n]$.
Thus, by definition of the jackknife+ predictive interval given in~\eqref{eqn:define_jackknife+},
we have $\Chn(X_{n+1})\subseteq (y_*, \infty)$.
This verifies~\eqref{eqn:three_events_step}, and thus completes the proof of the theorem.\end{proof}

Finally, we need to prove Lemma~\ref{lem:three_events}.
\begin{proof}[Proof of Lemma~\ref{lem:three_events}]
First, since $y_*$ is chosen to be the $(1-n^{-2})$ quantile of $|Y|$ under the distribution $P$,
we have
\[\Pp{P^n}{\max_{i\in[n]} |Y_i| \geq y_*} \leq \sum_{i=1}^n \Pp{P}{|Y_i|\geq y_*} \leq n\cdot n^{-2} = \frac{1}{n},\]
and thus $\Pp{P^n}{\Ecal_{\textnormal{max}}}\geq 1-\frac{1}{n}$ as desired.

Next, since $a(X_i)\iidsim\textnormal{Unif}\{0,\dots,M-1\}$ for $i\in[n]$,
it follows immediately that $ \mod(a(X_1) + \dots + a(X_n),M) \sim\textnormal{Unif}\{0,\dots,M-1\}$ also, and so
by definition of $M_1$ we have
\[\Pp{P^n}{\Ecal_{\textnormal{mod}}} = \frac{M_1}{M} = \frac{\left\lfloor M\left(\alpha - \sqrt{\frac{2\log n}{n}} - \frac{2}{n}\right)\right\rfloor}{M} 
\geq  \alpha - \sqrt{\frac{2\log n}{n}} -\frac{2}{n}- \frac{1}{M}.\]

Finally, we turn to $\Ecal_{\textnormal{unif}}$. This is the event that
$\sum_{i=1}^n \One{\mod(a(X_i) + m,M)< M - M_1}\geq  \lceil (1-\alpha)(n+1)\rceil$
for all integers $m$, but by definition of the modulo function,
it is equivalent to requiring that this bound holds only for all integers $m=0,\dots,M-1$, i.e.,
\[\Ecal_{\textnormal{unif}} = \bigcap_{m=0}^{M-1} \left\{\sum_{i=1}^n \One{\mod(a(X_i) + m,M)< M - M_1}\geq  \lceil (1-\alpha)(n+1)\rceil \right\}.\]
Now let $U_1,\dots,U_n\iidsim \textnormal{Unif}[0,1]$. Then $\lfloor MU_i\rfloor\iidsim \textnormal{Unif}\{0,\dots,M-1\}$, and so
\begin{equation}\label{eqn:lemma_step}\Pp{P^n}{\Ecal_{\textnormal{unif}} }
=\PP{\bigcap_{m=0}^{M-1} \left\{\sum_{i=1}^n \One{\mod(\lfloor MU_i\rfloor + m,M)< M - M_1}\geq  \lceil (1-\alpha)(n+1)\rceil \right\}}.\end{equation}

Next, suppose it holds that
\begin{equation}\label{eqn:DKW_for_Ecal_unif}\sup_{s\in[0,1]} \left|\sum_i \One{U_i< s} - ns\right| \leq  \sqrt{\frac{n\log n}{2}} .\end{equation}
Then, for each $m\in\{0,1,\dots,M-M_1-1\}$, we have
\begin{align*}
&\sum_{i=1}^n \One{\mod(\lfloor MU_i\rfloor + m,M)< M - M_1}\\
&=\sum_{i=1}^n \One{\lfloor MU_i\rfloor< M-M_1-m} +
\sum_{i=1}^n \One{\lfloor MU_i\rfloor\geq M-m}\\
&=\sum_{i=1}^n \One{U_i< 1-\frac{M_1+m}{M}} + n - 
\sum_{i=1}^n \One{U_i < 1 - \frac{m}{M}}\\
&\geq \left(n\cdot \left(1-\frac{M_1+m}{M}\right) -  \sqrt{\frac{n\log n}{2}}\right) + n -  \left(n \cdot \left(1 - \frac{m}{M}\right) +  \sqrt{\frac{n\log n}{2}}\right)\\
&= n\cdot \left(1 - \frac{M_1}{M}\right) - \sqrt{2n\log n}\\
&\geq \lceil (1-\alpha)(n+1)\rceil,
\end{align*}
where the last step holds by definition of $M_1$.
Next, for each $m\in\{ M-M_1,\dots,M-1\}$, we have
\begin{align*}
&\sum_{i=1}^n \One{\mod(\lfloor MU_i\rfloor + m,M)< M - M_1}\\
&=\sum_{i=1}^n \One{M-m \leq \lfloor MU_i\rfloor< 2M-M_1-m} \\
&=\sum_{i=1}^n \One{U_i< 2-\frac{M_1+m}{M}}  - \sum_{i=1}^n \One{U_i < 1 - \frac{m}{M}} \\
&\geq \left(n\cdot \left(2-\frac{M_1+m}{M}\right) -  \sqrt{\frac{n\log n}{2}}\right) - \left(n \cdot \left(1-\frac{m}{M}\right) + \sqrt{\frac{n\log n}{2}}\right)\\
&= n\cdot \left(1 - \frac{M_1}{M}\right) - \sqrt{2n\log n}\\
&\geq \lceil (1-\alpha)(n+1)\rceil.
\end{align*}
Therefore, returning to~\eqref{eqn:lemma_step}, we have
\[\Pp{P^n}{\Ecal_{\textnormal{unif}} } \geq \PP{\sup_{s\in[0,1]} \left|\sum_i \One{U_i\leq s} - ns\right| \leq  \sqrt{\frac{n\log n}{2}}}
\geq 1 - \frac{2}{n},\]
where the lasts step holds by the Dvoretzky--Kiefer--Wolfowitz inequality. This completes the proof.
\end{proof}

\section{Empirical results}

The theoretical results above suggest that we should be concerned about the training conditional coverage of the full conformal and jackknife+ prediction intervals. However, the algorithms used as counterexamples in the proof are extremely unrealistic. In particular, the
constructions appearing in the proofs display an extremely high amount of instability, since the inclusion of a single training point can greatly impact the output of the regression function. 

Therefore, a natural question is how large the variability of $\alpha_P(\traindata)$ is in practice, particularly in unstable environments.
In our simulation, we will examine the empirical performance of the training-conditional miscoverage rate $\alpha_P(\traindata)$
for the four distribution-free predictive inference tools studied in this work,
for a linear regression task where high dimensionality may cause some instability in the regression 
algorithm.\footnote{Code to reproduce the experiment is available at \url{http://rinafb.github.io/code/training_conditional.zip}.} 

\subsection{Setting}
We choose a target coverage rate of 90\%, i.e., $\alpha = 0.1$,
and will compare the performance of
 split conformal (with $n_0=n_1=n/2$), full conformal, jackknife+, and CV+ (with $K=20$ folds).

We use sample size $n=500$ for the training set, and $n_{\textnormal{test}}=1000$ for the test set.
For each trial, we generate i.i.d.~data points $(X_i,Y_i)$, $i=1,\dots,n+n_{\textnormal{test}}$, from the following distribution:
\[X_i \sim \mathcal{N}(0, I_d) \textnormal{ and } Y_i \mid X_i \sim \mathcal{N}(X_i^\top \beta, 1),\]
where $\beta = \sqrt{10}\cdot U$ for a random unit vector $U$ drawn uniformly from the unit sphere in $\Xcal = \R^d$.
We repeat the experiment for each  dimension $d = 125,250,500,1000$, with $200$ independent trials for each dimension.

Our algorithm $\alg$ is given by ridge regression with penalty parameter $\lambda = 0.0001$, i.e.,
for any data set $(x_1,y_1),\dots,(x_N,y_N)$, the fitted model $\muh = \alg\big((x_1,y_1),\dots,(x_N,y_N)\big)$ is given by
\[\muh(x) = x^\top\widehat\beta\textnormal{ where }\widehat\beta = \arg\min_{\beta\in\R^d}\left\{\sum_{i=1}^N (y_i - x_i^\top\beta)^2 + \lambda\|\beta\|^2_2\right\}.\]
Finally, we estimate the training-conditional miscoverage rate $\alpha_P(\traindata)$ for each of the four methods, by computing
the empirical coverage over the $n_{\textnormal{test}}$ many test points:
\[\alpha_P(\traindata) \approx \frac{1}{n_{\textnormal{test}}}\sum_{i=1}^{n_{\textnormal{test}}} \One{Y_{n+i} \in\Chn(X_{n+i})}.\]

\paragraph{Instability of the algorithm} In this simulation,
 we apply the ridge regression algorithm to a training set where
 the $x_i$'s are standard Gaussian, with a penalty parameter $\lambda\approx 0$.
This optimization problem is extremely poorly conditioned when the training set size is $\approx d$,
 but is well-behaved if the number of training points is {\em either} sufficiently large or sufficiently small relative to $d$
 (see \citet{hastie2022surprises} for an analysis of ``ridgeless'' regression, i.e., taking $\lambda\rightarrow 0$,
  in the overparametrized setting).
As a result, if the number of training points is $\approx d$, 
 the outcome of the algorithm may be highly unstable---the 
 stability assumption~\eqref{eqn:assume_stability} will not hold, and in general, 
predictions $\muh(x)$ will vary greatly with a new draw of the training set. However, instability will
not occur if the training set size  is substantially smaller than $d$ or larger than $d$.

 For split conformal, since the model is trained
on  $n_0 = n/2 = 250$ many data points, this instability will be high for $d=250$ (but not for $d=125,500,1000$). In contrast, when running full conformal
or jackknife+ or CV+,
 the models are trained on $n+1 = 501$ or $n-1=499$ or $n-n/K = 475$ many data points, respectively. Therefore, 
 for these three methods, we expect
instability to be high for dimension $d=500$ (but not for $d=125,250,1000$).

\begin{figure}[!htbp]
	\centering
		\includegraphics[width=0.45\textwidth]{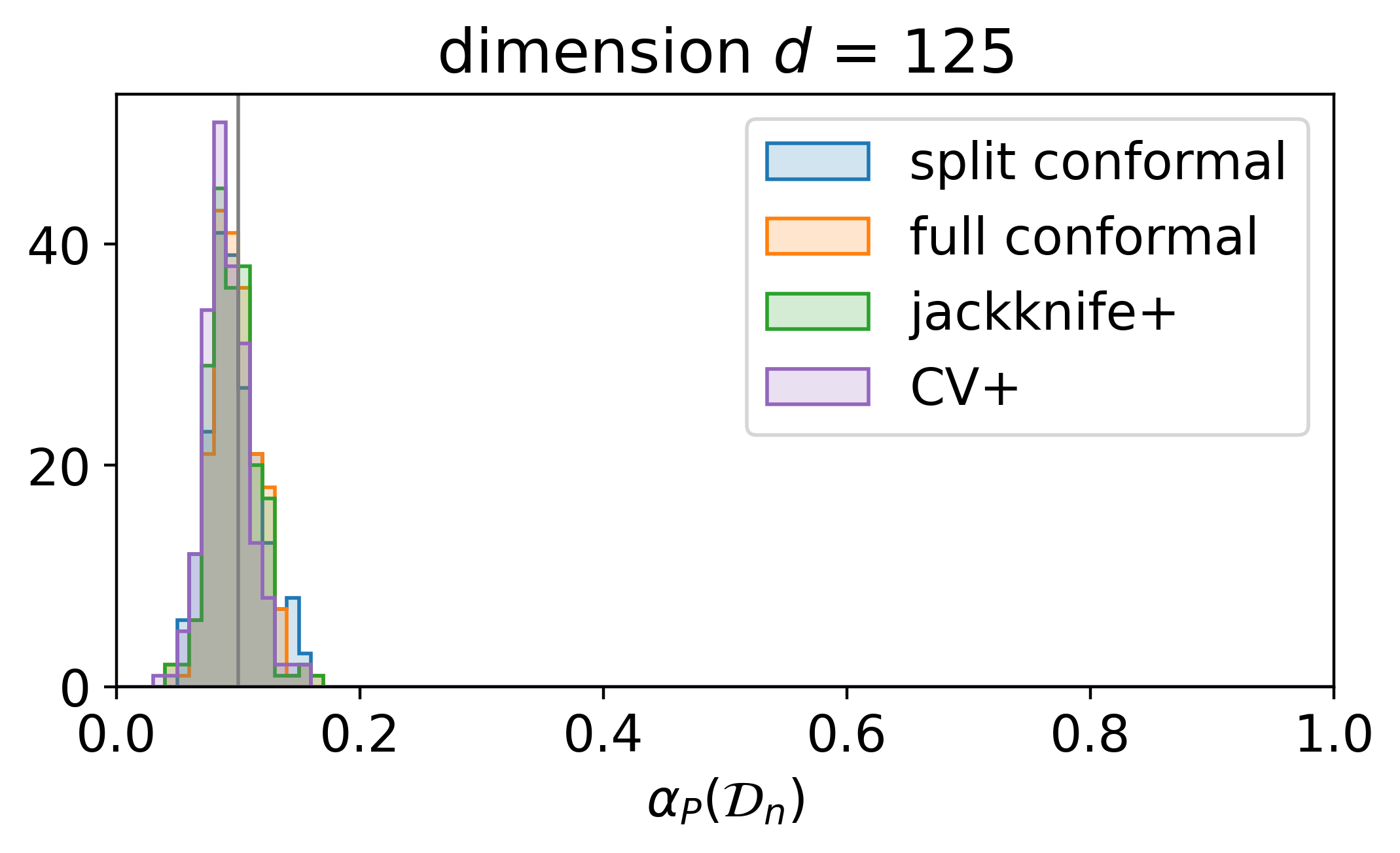}\quad
		\includegraphics[width=0.45\textwidth]{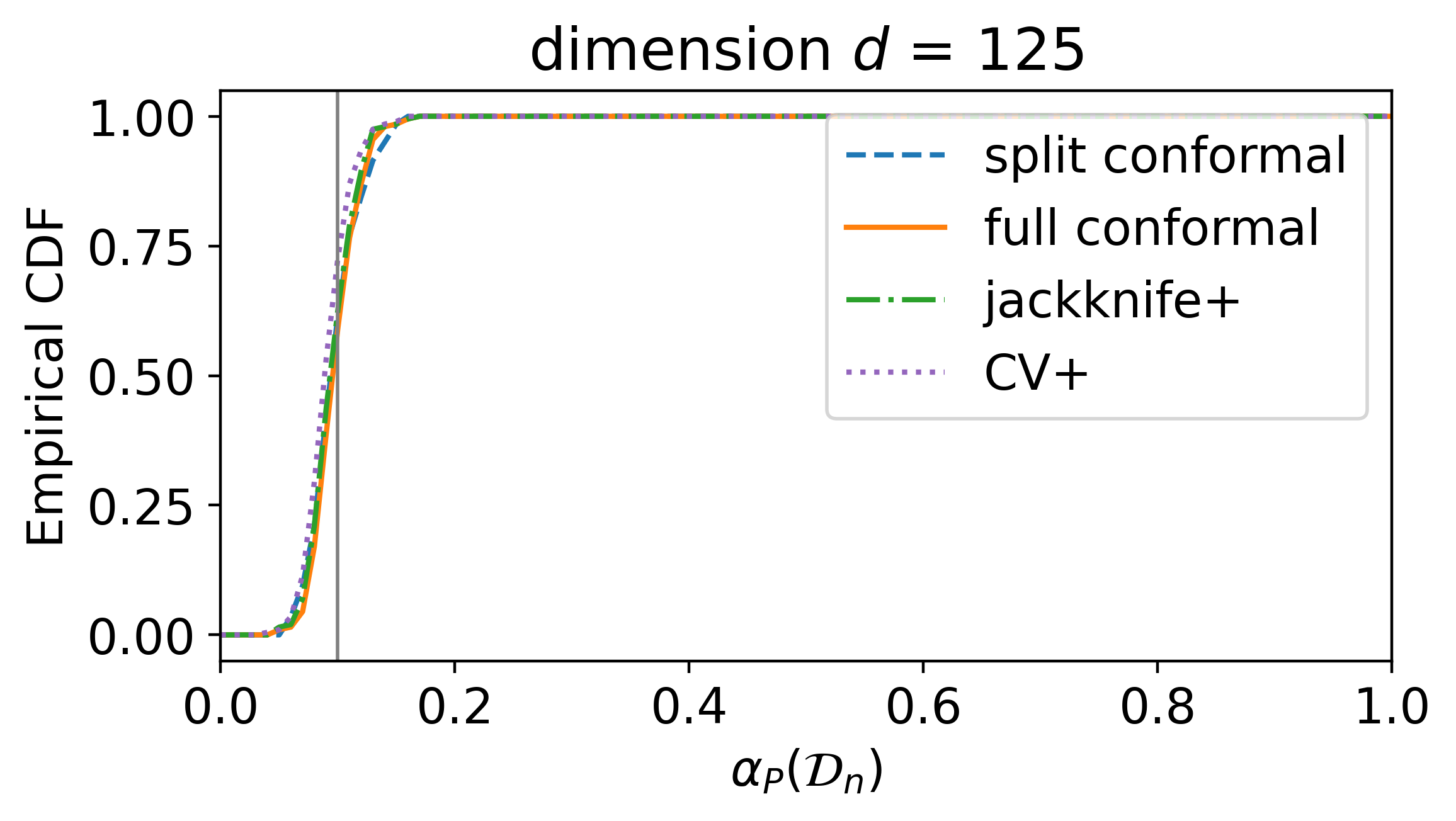}\\\smallskip
		\includegraphics[width=0.45\textwidth]{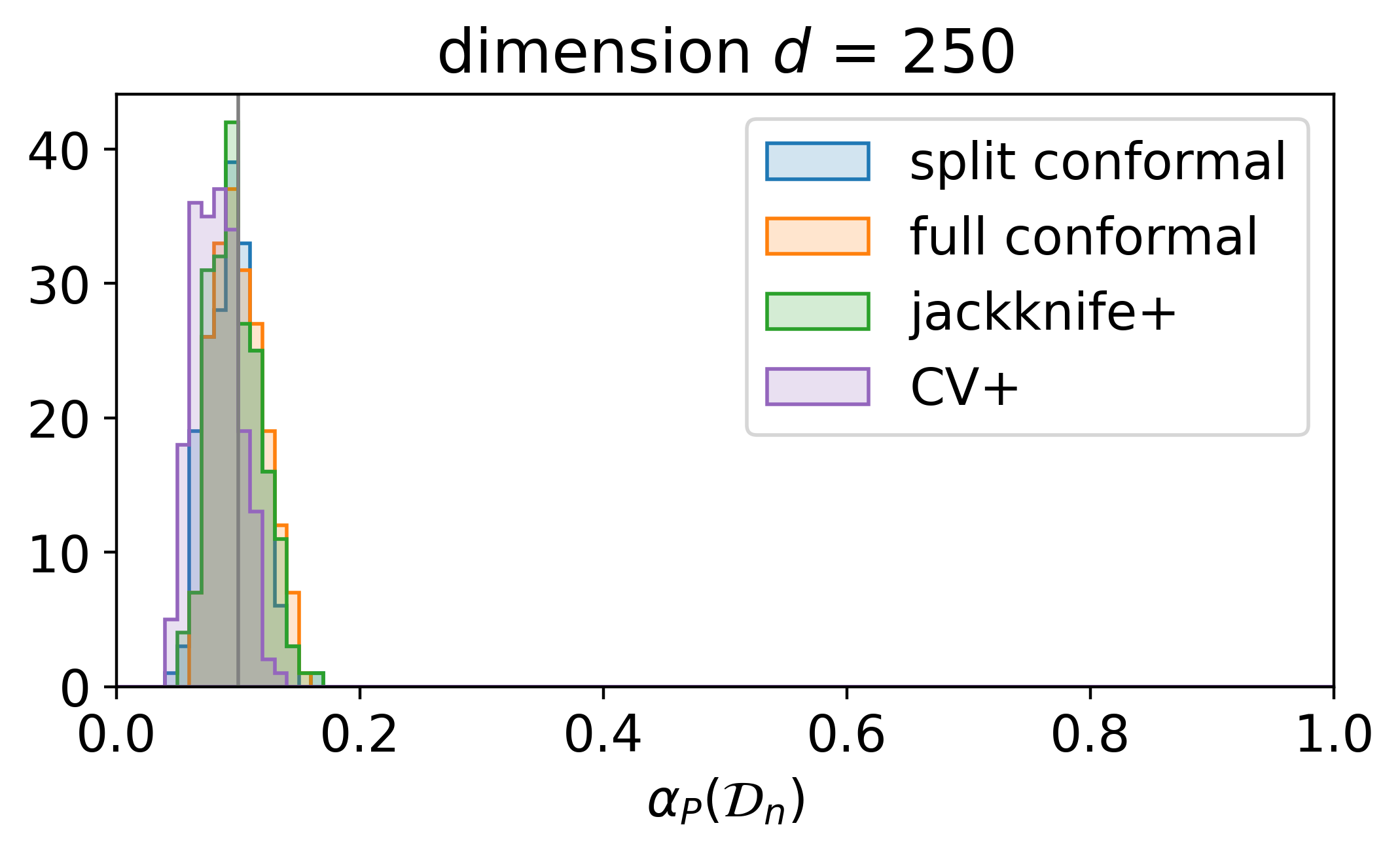}\quad
		\includegraphics[width=0.45\textwidth]{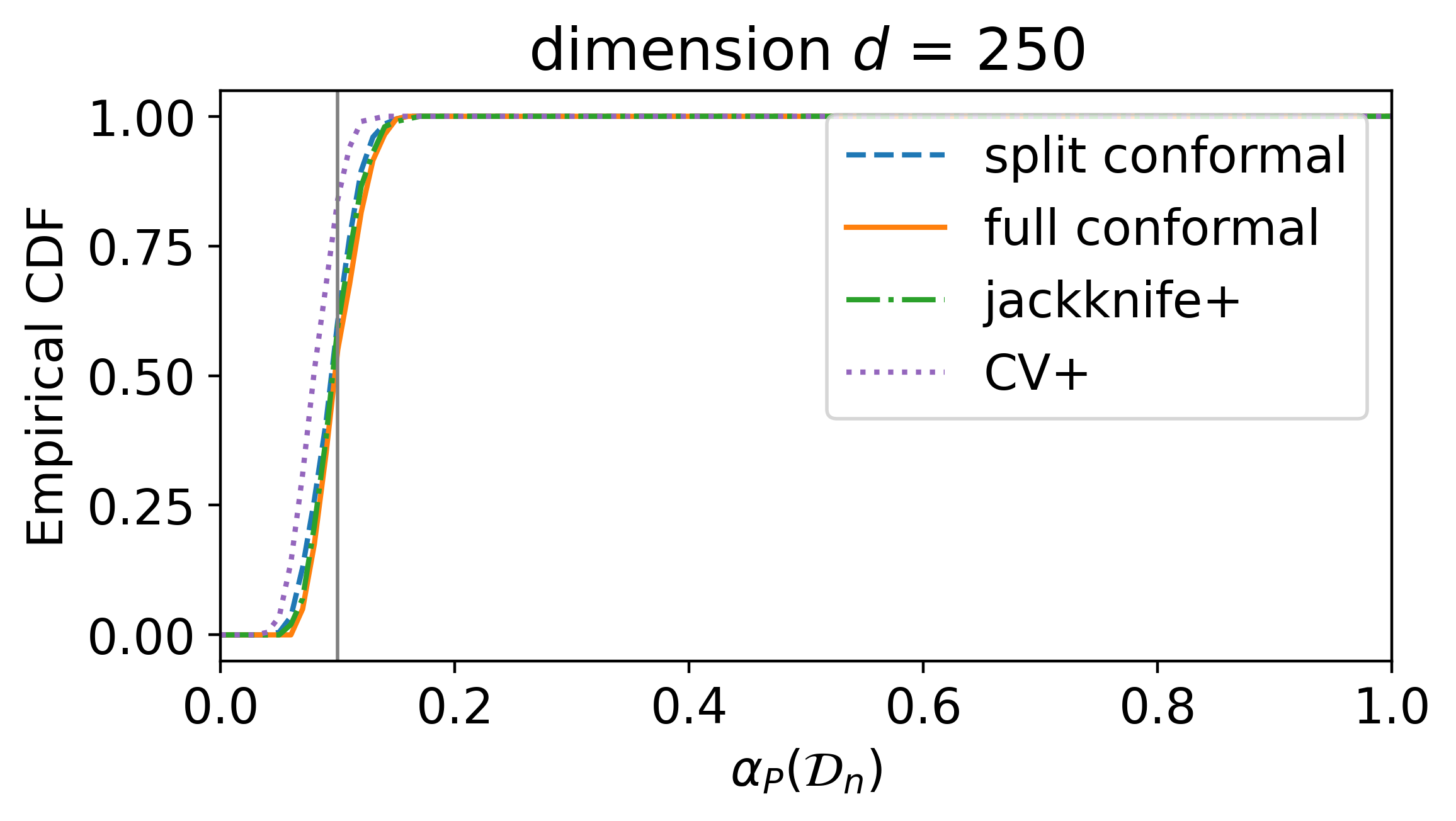}\\\smallskip
		\includegraphics[width=0.45\textwidth]{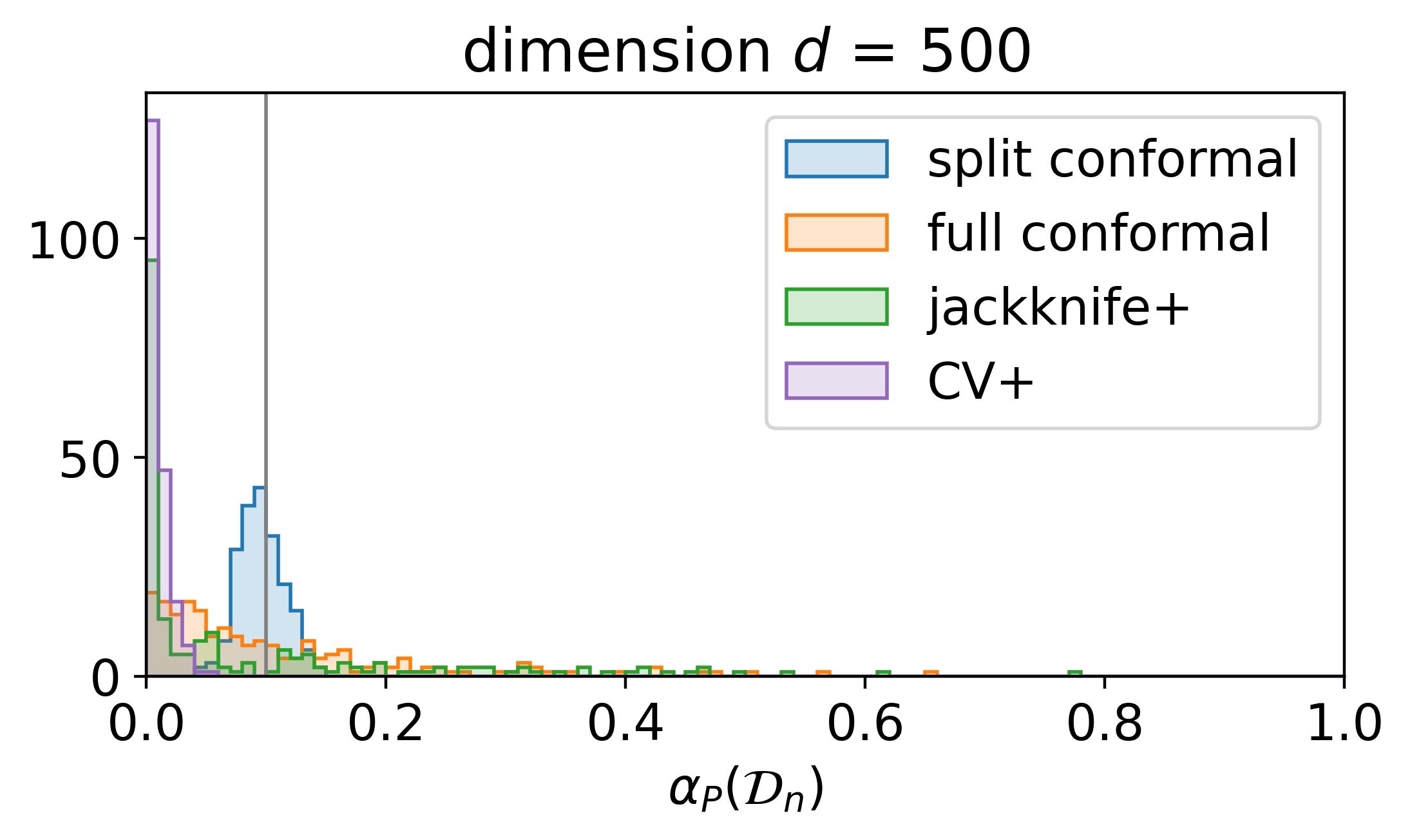}\quad
		\includegraphics[width=0.45\textwidth]{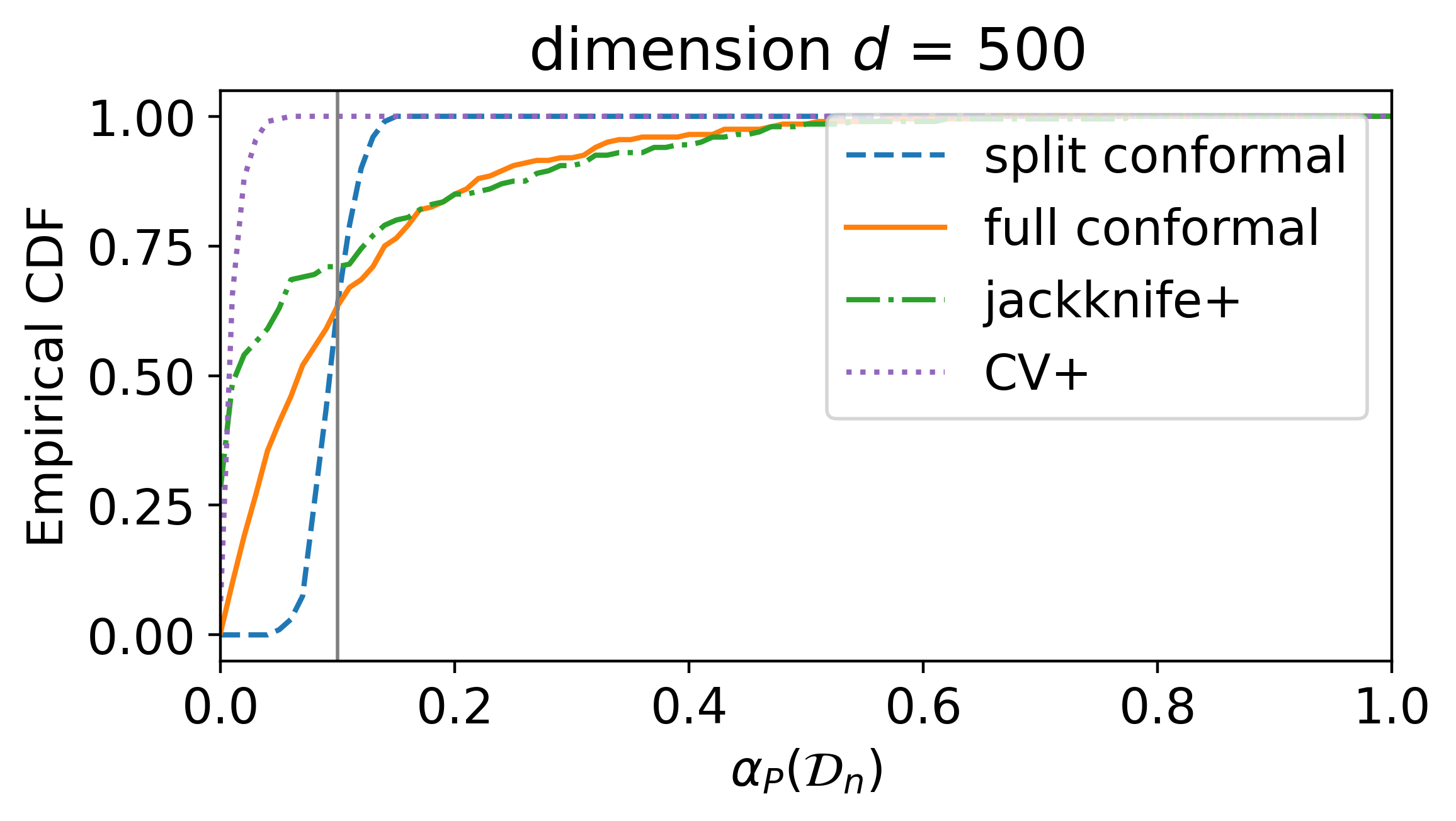}\\\smallskip
		\includegraphics[width=0.45\textwidth]{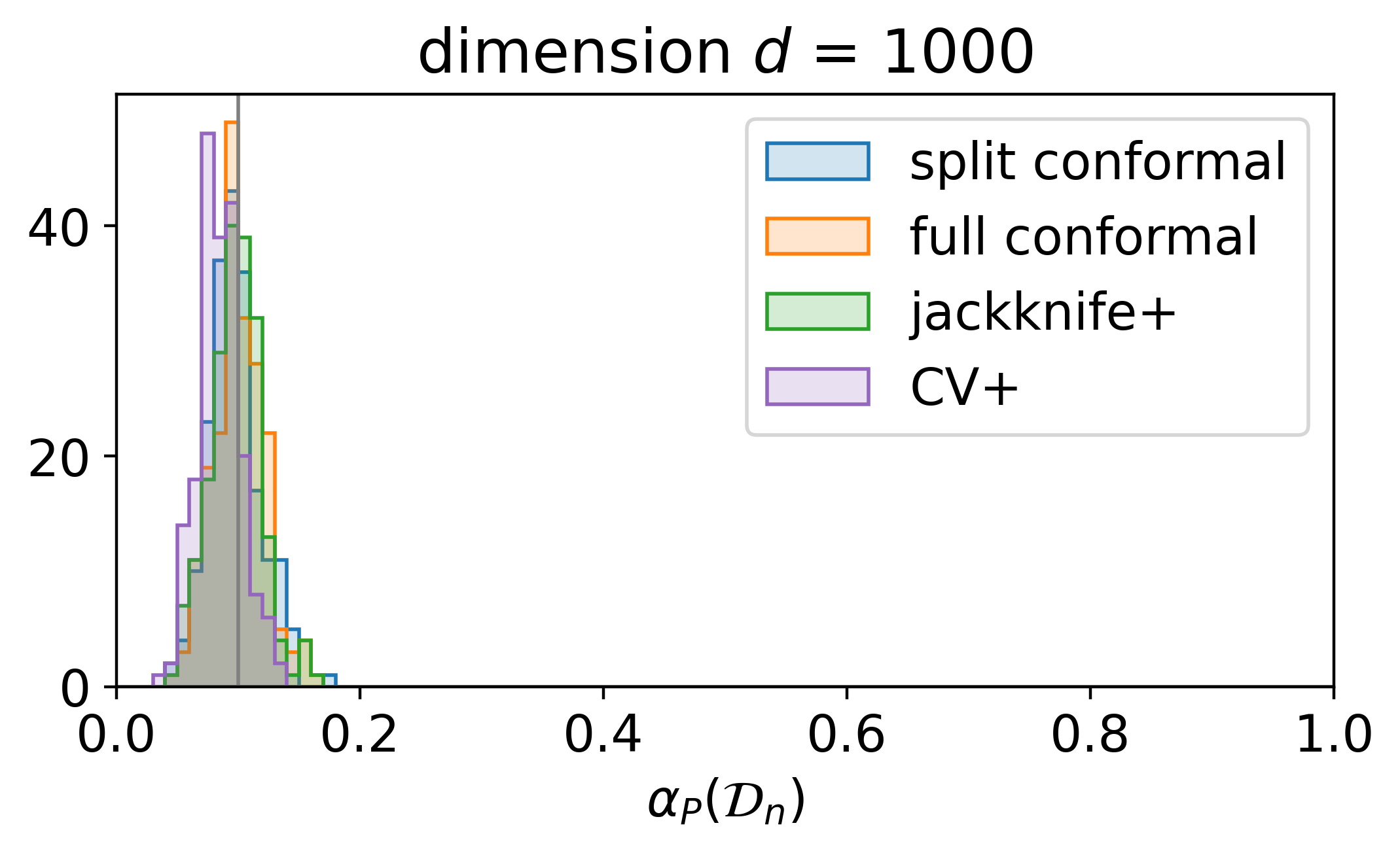}\quad
		\includegraphics[width=0.45\textwidth]{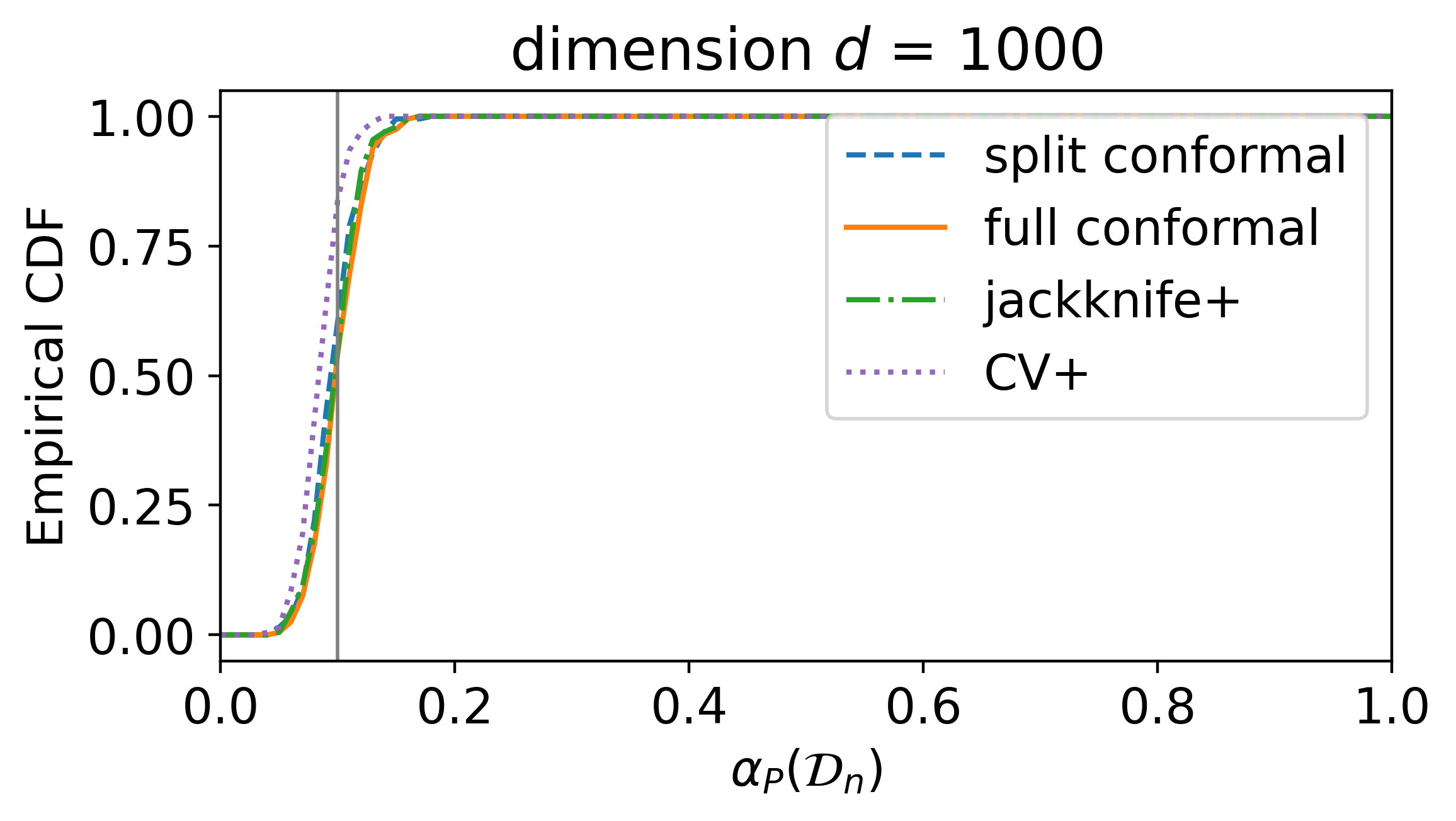}
	\caption{Plots of the estimated training-conditional miscoverage level $\alpha_P(\traindata)$ for
	$200$ independent trials of the simulation, at each dimension $d=125,250,500,1000$. The gray vertical
	lines indicates the target miscoverage level $\alpha = 0.1$. For each $d$, the empirical
	distribution of $\alpha_P(\traindata)$ is displayed as a histogram on the left, and as an empirical CDF
	on the right.}
	\label{fig:sims}
\end{figure}

\subsection{Results}
The results of the simulations are displayed in Figure~\ref{fig:sims}.
For both split conformal and CV+, as the theory suggests, the training-conditional miscoverage rate $\alpha_P(\traindata)$
is consistently near or below the nominal level $\alpha=0.1$. This is the case
even for dimensions where the trained models for split conformal or for CV+ are likely to exhibit instability,
as discussed above. 
Specifically,  for split conformal,  the $\alpha_P(\traindata)$ values
concentrate around $\alpha$ for each choice of dimension $d$.
For CV+, the same is true for dimensions $d=125,250,1000$
where the algorithm is fairly stable,
while in the unstable regime $d=500$, CV+ appears to be 
highly conservative, with $\alpha_P(\traindata)$ values
consistently much lower than $\alpha$. (The same outcome occurs 
if we repeat the experiment with dimension $d=475$, where
instability for CV+ is highest, but for brevity we do not show results for this value of $d$.)

In contrast, for full conformal and for jackknife+,
we see that at $d=500$ (where the trained models for these two methods are likely to be unstable),
the training-conditional miscoverage rate $\alpha_P(\traindata)$ is highly variable---specifically, we see 
that $\alpha_P(\traindata)$ is substantially higher than nominal level $\alpha=0.1$ for a 
large fraction of the trials. On the other hand, the training-conditional miscoverage
rate $\alpha_P(\traindata)$ concentrates near $\alpha = 0.1$ for both methods, for all other values of $d$.
This is true both
for a low-dimensional setting when $d=125,250$ and a high-dimensional (i.e., overparameterized) setting when $d=1000$,
suggesting
that algorithmic stability may play a key role in understanding training-conditional coverage, as we discuss further below.

\section{Conclusion}

In this paper, we examine one form of conditional validity for methods of distribution-free predictive inference: training-conditional validity. While this form of validity has been previously established for the split conformal prediction method,
here we examined whether this property holds for other distribution-free prediction tools. We showed that training conditional coverage guarantees can be ensured for the CV+ method, but are not possible for either the full conformal or jackknife+ methods without
additional assumptions. In addition, we demonstrated empirically that training-conditional miscoverage rates far above the nominal level $\alpha$ can occur in realistic data sets with the latter two methods.

\subsection{The role of algorithmic stability}
An interesting open question is whether there are any mild assumptions that would ensure training-conditional coverage
for full conformal and/or for jackknife+. One possibility is to consider
algorithmic stability assumptions such as~\eqref{eqn:assume_stability}.
In particular, our empirical results show that poor training-conditional coverage for these two methods
is observed exactly in those settings where the behavior of the regression algorithm $\alg$ is highly
unstable (specifically, when $d\approx n$, in our linear regression simulation). This suggests
that assuming stability of $\alg$ could potentially be sufficient to ensure training-conditional coverage
for these methods.
We leave this open question for future work.

\subsection*{Acknowledgements}
R.F.B.~was 
supported by the National Science Foundation via grants DMS-1654076 and DMS-2023109,
and by the Office of Naval Research via grant N00014-20-1-2337. 
The authors are grateful to Ruiting Liang for feedback on an earlier draft of this paper.

\bibliographystyle{plainnat}
\bibliography{bib}

\begin{thebibliography}{25}
\providecommand{\natexlab}[1]{#1}
\providecommand{\url}[1]{\texttt{#1}}
\expandafter\ifx\csname urlstyle\endcsname\relax
  \providecommand{\doi}[1]{doi: #1}\else
  \providecommand{\doi}{doi: \begingroup \urlstyle{rm}\Url}\fi

\bibitem[Barber et~al.(2021{\natexlab{a}})Barber, Cand{\`e}s, Ramdas, and
  Tibshirani]{barber2021limits}
Rina~Foygel Barber, Emmanuel~J Cand{\`e}s, Aaditya Ramdas, and Ryan~J
  Tibshirani.
\newblock The limits of distribution-free conditional predictive inference.
\newblock \emph{Information and Inference: A Journal of the IMA}, 10\penalty0
  (2):\penalty0 455--482, 2021{\natexlab{a}}.

\bibitem[Barber et~al.(2021{\natexlab{b}})Barber, Cand{\`e}s, Ramdas, and
  Tibshirani]{barber2021predictive}
Rina~Foygel Barber, Emmanuel~J Cand{\`e}s, Aaditya Ramdas, and Ryan~J
  Tibshirani.
\newblock Predictive inference with the jackknife+.
\newblock \emph{The Annals of Statistics}, 49\penalty0 (1):\penalty0 486--507,
  2021{\natexlab{b}}.

\bibitem[Bates et~al.(2021)Bates, Angelopoulos, Lei, Malik, and
  Jordan]{bates2021distribution}
Stephen Bates, Anastasios Angelopoulos, Lihua Lei, Jitendra Malik, and Michael
  Jordan.
\newblock Distribution-free, risk-controlling prediction sets.
\newblock \emph{Journal of the ACM (JACM)}, 68\penalty0 (6):\penalty0 1--34,
  2021.

\bibitem[Chen et~al.(2018)Chen, Chun, and Barber]{chen2018discretized}
Wenyu Chen, Kelli-Jean Chun, and Rina~Foygel Barber.
\newblock Discretized conformal prediction for efficient distribution-free
  inference.
\newblock \emph{Stat}, 7\penalty0 (1):\penalty0 e173, 2018.

\bibitem[Dudley and Norvai{\v{s}}a(2011)]{dudley2011concrete}
Richard~M Dudley and Rimas Norvai{\v{s}}a.
\newblock \emph{Concrete functional calculus}.
\newblock Springer, 2011.

\bibitem[Hastie et~al.(2022)Hastie, Montanari, Rosset, and
  Tibshirani]{hastie2022surprises}
Trevor Hastie, Andrea Montanari, Saharon Rosset, and Ryan~J Tibshirani.
\newblock Surprises in high-dimensional ridgeless least squares interpolation.
\newblock \emph{The Annals of Statistics}, 50\penalty0 (2):\penalty0 949--986,
  2022.

\bibitem[Kivaranovic et~al.(2020)Kivaranovic, Johnson, and
  Leeb]{kivaranovic2020adaptive}
Danijel Kivaranovic, Kory~D Johnson, and Hannes Leeb.
\newblock Adaptive, distribution-free prediction intervals for deep networks.
\newblock In \emph{International Conference on Artificial Intelligence and
  Statistics}, pages 4346--4356. PMLR, 2020.

\bibitem[Lei(2019)]{lei2019fast}
Jing Lei.
\newblock Fast exact conformalization of the lasso using piecewise linear
  homotopy.
\newblock \emph{Biometrika}, 106\penalty0 (4):\penalty0 749--764, 2019.

\bibitem[Lei and Wasserman(2014)]{lei2014distribution}
Jing Lei and Larry Wasserman.
\newblock Distribution-free prediction bands for non-parametric regression.
\newblock \emph{Journal of the Royal Statistical Society: Series B (Statistical
  Methodology)}, 76\penalty0 (1):\penalty0 71--96, 2014.

\bibitem[Lei et~al.(2018)Lei, G'Sell, Rinaldo, Tibshirani, and
  Wasserman]{lei2018distribution}
Jing Lei, Max G'Sell, Alessandro Rinaldo, Ryan~J Tibshirani, and Larry
  Wasserman.
\newblock Distribution-free predictive inference for regression.
\newblock \emph{Journal of the American Statistical Association}, 113\penalty0
  (523):\penalty0 1094--1111, 2018.

\bibitem[L{\"o}fstr{\"o}m et~al.(2015)L{\"o}fstr{\"o}m, Bostr{\"o}m, Linusson,
  and Johansson]{lofstrom2015bias}
Tuve L{\"o}fstr{\"o}m, Henrik Bostr{\"o}m, Henrik Linusson, and Ulf Johansson.
\newblock Bias reduction through conditional conformal prediction.
\newblock \emph{Intelligent Data Analysis}, 19\penalty0 (6):\penalty0
  1355--1375, 2015.

\bibitem[Park et~al.(2020)Park, Li, Lee, and Bastani]{park2020pac}
Sangdon Park, Shuo Li, Insup Lee, and Osbert Bastani.
\newblock Pac confidence predictions for deep neural network classifiers.
\newblock \emph{arXiv preprint arXiv:2011.00716}, 2020.

\bibitem[Park et~al.(2021)Park, Dobriban, Lee, and Bastani]{park2021pac}
Sangdon Park, Edgar Dobriban, Insup Lee, and Osbert Bastani.
\newblock {PAC} prediction sets under covariate shift.
\newblock \emph{arXiv preprint arXiv:2106.09848}, 2021.

\bibitem[Qiu et~al.(2022)Qiu, Dobriban, and Tchetgen]{qiu2022distribution}
Hongxiang Qiu, Edgar Dobriban, and Eric~Tchetgen Tchetgen.
\newblock Distribution-free prediction sets adaptive to unknown covariate
  shift.
\newblock \emph{arXiv preprint arXiv:2203.06126}, 2022.

\bibitem[R{\"u}schendorf(1982)]{ruschendorf1982random}
Ludger R{\"u}schendorf.
\newblock Random variables with maximum sums.
\newblock \emph{Advances in Applied Probability}, 14\penalty0 (3):\penalty0
  623--632, 1982.

\bibitem[Steinberger and Leeb(2018)]{steinberger2018conditional}
Lukas Steinberger and Hannes Leeb.
\newblock Conditional predictive inference for high-dimensional stable
  algorithms.
\newblock \emph{arXiv preprint arXiv:1809.01412}, 2018.

\bibitem[Vovk(2012)]{vovk2012conditional}
Vladimir Vovk.
\newblock Conditional validity of inductive conformal predictors.
\newblock In \emph{Asian conference on machine learning}, pages 475--490. PMLR,
  2012.

\bibitem[Vovk(2015)]{vovk2015cross}
Vladimir Vovk.
\newblock Cross-conformal predictors.
\newblock \emph{Annals of Mathematics and Artificial Intelligence}, 74\penalty0
  (1):\penalty0 9--28, 2015.

\bibitem[Vovk and Wang(2020)]{vovk2020combining}
Vladimir Vovk and Ruodu Wang.
\newblock Combining p-values via averaging.
\newblock \emph{Biometrika}, 107\penalty0 (4):\penalty0 791--808, 2020.

\bibitem[Vovk et~al.(2005)Vovk, Gammerman, and Shafer]{vovk2005algorithmic}
Vladimir Vovk, Alexander Gammerman, and Glenn Shafer.
\newblock \emph{Algorithmic learning in a random world}.
\newblock Springer Science \& Business Media, 2005.

\bibitem[Vovk et~al.(2018)Vovk, Nouretdinov, Manokhin, and
  Gammerman]{vovk2018cross}
Vladimir Vovk, Ilia Nouretdinov, Valery Manokhin, and Alexander Gammerman.
\newblock Cross-conformal predictive distributions.
\newblock In \emph{Conformal and Probabilistic Prediction and Applications},
  pages 37--51. PMLR, 2018.

\bibitem[Wald(1943)]{wald1943extension}
Abraham Wald.
\newblock An extension of wilks' method for setting tolerance limits.
\newblock \emph{The Annals of Mathematical Statistics}, 14\penalty0
  (1):\penalty0 45--55, 1943.

\bibitem[Wilks(1941)]{wilks1941determination}
Samuel~S Wilks.
\newblock Determination of sample sizes for setting tolerance limits.
\newblock \emph{The Annals of Mathematical Statistics}, 12\penalty0
  (1):\penalty0 91--96, 1941.

\bibitem[Yang and Kuchibhotla(2021)]{yang2021finite}
Yachong Yang and Arun~Kumar Kuchibhotla.
\newblock Finite-sample efficient conformal prediction.
\newblock \emph{arXiv preprint arXiv:2104.13871}, 2021.

\bibitem[Yang et~al.(2022)Yang, Kuchibhotla, and Tchetgen]{yang2022doubly}
Yachong Yang, Arun~Kumar Kuchibhotla, and Eric~Tchetgen Tchetgen.
\newblock Doubly robust calibration of prediction sets under covariate shift.
\newblock \emph{arXiv preprint arXiv:2203.01761}, 2022.

\end{thebibliography}

\end{document}